\newcommand{\R}{{\ensuremath{\mathbb{R}}}}
\newcommand{\N}{{\ensuremath{\mathbb{N}}}}
\newcommand{\C}{{\ensuremath{\mathbb{C}}}}
\newcommand{\B}{{\ensuremath{\mathbb{B}}}}
\def\int{\mathop{\mathrm{Int}}}
\DeclareMathOperator{\diag}{diag}
\DeclareMathOperator{\Tr}{Tr}
\newcommand{\Sph}{\ensuremath{\mathbb{S}}}
\newcommand{\abs}[1]{\ensuremath{ {\left| #1 \right|} }}
\newcommand{\inner}[2]{\ensuremath{ {\left< #1 , #2 \right>} }}
\newcommand{\norm}[1]{\ensuremath{ {\left\| #1 \right\|} }}
\newtheorem{proposition}{Proposition}[section]
\newtheorem{lemma}[proposition]{Lemma}
\newtheorem{theorem}[proposition]{Theorem}
\newtheorem{corollary}[proposition]{Corollary}
\theoremstyle{definition}
\newtheorem{problem}[proposition]{Problem}
\newtheorem{example}[proposition]{Example}
\newtheorem{claim}{Claim}[proposition]
\numberwithin{equation}{section}
\begin{document}


\title{A variant of the Kaplansky problem for maps on positive matrices}

 \author{Mateo Toma\v{s}evi\'{c}}


\address{M.~Toma\v{s}evi\'c, Department of Mathematics, Faculty of Science, University of Zagreb, Bijeni\v{c}ka 30, 10000 Zagreb, Croatia}
\email{mateo.tomasevic@math.hr}

\thanks{The author thanks Peter \v{S}emrl for introducing the problem and for providing an outline of a possible approach. The author is also very grateful to Ilja Gogi\'{c} for many helpful suggestions and ideas.}

\keywords{spectrum preserver, order preserver, hermitian matrices, positive matrices, Kaplansky problem}

\subjclass[2010]{47B49}

\date{\today}

\begin{abstract}
We prove that all injective maps on positive complex matrices which preserve order and shrink spectrum are implemented by unitary or antiunitary conjugations. We show by counterexamples that all assumptions are indispensable. The result easily generalizes to maps on hermitian matrices.
\end{abstract}

\maketitle

\section{Introduction}

By the well-known Gleason-Kahane-\.{Z}elazko theorem (\cite{Gleason, KahaneZelazko}), a linear functional $\varphi : A \to \C$ on a unital complex Banach algebra $A$ is a character (i.e. a nonzero algebra homomorphism) if and only if $\varphi$ maps every element $a \in A$ inside its spectrum $\sigma(a)$. One can also easily show (\cite[Lemma 2.1.1]{Kanuith})  that a linear functional $\varphi : A \to \C$ is a character if and only if $\varphi$ is a unital Jordan homomorphism, that is $\varphi(1) = 1$ and $\varphi$ preserves squares (i.e. $\varphi(a^2) = \varphi(a)^2$ for all $a \in A$).





Inspired by the Gleason-Kahane-\.{Z}elazko theorem, Kaplansky in 1970 formulated the following problem within his famous lecture notes \cite{Kaplansky}:

\begin{problem}[The Kaplansky problem, the general variant]
Let $\phi : A \to B$ be a linear unital map between complex unital Banach algebras which shrinks spectrum, i.e. it satisfies $\sigma(\phi(a)) \subseteq \sigma(a)$ for all $a \in A.$ Is $\phi$ necessarily a Jordan homomorphism?
\end{problem}

Note that since $\phi$ is linear and unital, the spectrum shrinking condition is equivalent to the simpler requirement that $\phi$ preserves invertibility, i.e. for every invertible element $a \in A$, $\phi(a)$ is invertible in $B$. It is well-known that the general variant of the Kaplansky problem has a negative answer. Namely, consider the unital subalgebra of block upper triangular matrices
$$A = \left\{\begin{bmatrix} A & B \\ 0 & C\end{bmatrix} \in M_4(\C) : A,B,C \in M_2(\C)\right\}$$
of the unital Banach algebra $M_4(\C)$, and the linear map
$$\phi : A \to A, \qquad \phi\left(\begin{bmatrix} A & B \\ 0 & C\end{bmatrix}\right) := \begin{bmatrix} A & B \\ 0 & C^t\end{bmatrix}$$
which is unital and spectrum preserving, but not a Jordan homomorphism (see \cite[page 28]{Aupetit1}). Note that the above Banach algebra $A$ is not semisimple.

A famous conjecture due to Aupetit states that the Kaplansky problem has a positive answer under additional assumptions that the Banach algebras $A$ and $B$ are semisimple and that $\phi$ is surjective. As far as the author knows, this problem is still widely open, even for $C^*$-algebras (see \cite{BresarSemrl}, page 270).

In this paper, we consider a variant of the Kaplansky problem which removes the linearity assumption and restricts the problem to the cone of positive elements $\B(H)_+$ of a particular $C^*$-algebra $\B(H)$ of bounded linear operators on a Hilbert space $H$. Direct inspiration for this paper is the main result from \cite{semrl}, which also easily generalizes to maps on self-adjoint operators:

\begin{theorem}[\v{S}emrl]\label{semrl}
Let $H$ be a Hilbert space and $\phi : \B(H)_+ \to \B(H)_+$ a surjective map which preserves order and spectrum. Then there exists a unitary or antiunitary map $U : H \to H$ such that
$$\phi(A) = UAU^*, \quad \text{ for all }A \in \B(H)_+.$$
\end{theorem}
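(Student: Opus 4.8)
The plan is to use spectral bookkeeping to reduce the problem to the action of $\phi$ on one‑dimensional projections, and then to bootstrap back up to all of $\B(H)_+$. First, since $0$ and $I$ are the unique positive operators with spectra $\{0\}$ and $\{1\}$, we have $\phi(0)=0$ and $\phi(I)=I$; and since a positive operator is a projection exactly when its spectrum is contained in $\{0,1\}$, $\phi$ carries projections to projections, and by surjectivity the preimage of a projection is a projection, so (reading order‑preservation as a two‑sided implication, which also makes $\phi$ injective) $\phi$ restricts to an order automorphism of the projection lattice $P(H)$. Standard order‑interval arguments --- the posets $[0,P]$ and $[0,\lambda P]$ with $\lambda>0$ are order‑isomorphic to the positive part of the unit ball of $\B(\operatorname{ran}P)$ --- show that $\phi$ preserves the rank of finite‑rank projections, so it permutes the rank‑one projections, inducing a bijection $\bar\phi$ of the projective space $\mathbb{P}(H)$; and since $\sigma(\lambda P)=\{0,\lambda\}$ for $P\ne0$, we get $\phi(\lambda P)=\lambda R$ for a projection $R$ which, comparing $\phi(\mu P)\le\phi(\lambda P)$ for $\mu\le\lambda$ against the rank bound, must equal $\phi(P)$. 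Hence $\phi(\lambda P)=\lambda\phi(P)$ for every finite‑rank projection $P$ and every $\lambda\ge0$.

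The heart of the argument is to show that $\bar\phi$ preserves the angle between lines. Fix unit vectors $x,y$ spanning a plane, let $\theta\in(0,\tfrac{\pi}{2}]$ be the angle between $\C x$ and $\C y$, and let $y'\in\operatorname{span}\{x,y\}$ be a unit vector orthogonal to $y$. For $N$ large and $0<t<N$ set $C_{N,t}:=NP_y+tP_{y'}$; a routine $2\times2$ positivity check shows that $P_x\le C_{N,t}$ holds precisely when $t\ge N\sin^2\theta/(N-\cos^2\theta)$. Now apply $\phi$: from the homogeneity above and $C_{N,t}\le NP_{\operatorname{span}\{x,y\}}$ we get $\phi(C_{N,t})\le NP_M$, where $M:=\operatorname{ran}\phi(P_{\operatorname{span}\{x,y\}})$ is a plane containing $\bar\phi(\C x),\bar\phi(\C y),\bar\phi(\C y')$; and since $\sigma(\phi(C_{N,t}))=\sigma(C_{N,t})=\{0,N,t\}$ while $NP_{\bar\phi(\C y)}\le\phi(C_{N,t})$, the operator $\phi(C_{N,t})$ is forced to be $NP_{\bar\phi(\C y)}+tP_b$, with $b$ the unit vector of $M$ orthogonal to $\bar\phi(\C y)$. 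The same $2\times2$ computation then gives $\phi(P_x)=P_{\bar\phi(\C x)}\le\phi(C_{N,t})$ iff $t\ge N\sin^2\theta'/(N-\cos^2\theta')$, where $\theta'$ is the angle between $\bar\phi(\C x)$ and $\bar\phi(\C y)$. Since $\phi$ is an order isomorphism the two threshold conditions are equivalent for all $t$, so the thresholds coincide; as the threshold is a strictly increasing function of $\sin^2\theta$, this forces $\theta=\theta'$. Thus $\bar\phi$ (and $\bar\phi^{-1}$) preserve angles, in particular orthogonality. (For $\dim H=2$ take $M=H$ and $\sigma(C_{N,t})=\{N,t\}$; the argument is unchanged.)

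Next, since $\bar\phi$ is an orthogonality‑preserving bijection of $\mathbb{P}(H)$, Uhlhorn's theorem (valid for $\dim H\ge3$; for $\dim H=2$ one instead observes that an angle‑preserving bijection of $\mathbb{P}(\C^2)\cong S^2$ is an isometry of $S^2$, hence lies in $O(3)$, hence is induced by a unitary or antiunitary; $\dim H=1$ is trivial) yields a unitary or antiunitary $U$ on $H$ with $\phi(P_v)=UP_vU^*$ for every unit vector $v$, and then $\phi(\lambda P_v)=U(\lambda P_v)U^*$ for all $\lambda\ge0$. Replace $\phi$ by $\psi:=\Ad(U^*)\circ\phi$, which is again a surjective order‑ and spectrum‑preserving map on $\B(H)_+$ (as $\Ad(U^*)$ preserves order and, since spectra are real, spectrum), and which now fixes every $\lambda P_v$. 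For $A\in\B(H)_+$ and a unit vector $v$ one has $\max\{\lambda\ge0:\lambda P_v\le A\}=\langle A^{-1}v,v\rangle^{-1}$ (with the value $0$ when $v\notin\operatorname{ran}A^{1/2}$; for invertible $A$ this is $\lambda P_v\le A\Leftrightarrow\lambda\|A^{-1/2}v\|^2\le1$, and the general case follows by approximation). Because $\psi$ fixes each $\lambda P_v$ and preserves order in both directions, $\lambda P_v\le A\Leftrightarrow\lambda P_v\le\psi(A)$ for all $\lambda$ and $v$, so $A$ and $\psi(A)$ induce the same function $v\mapsto\langle A^{-1}v,v\rangle^{-1}$, which determines $A$; hence $\psi(A)=A$. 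Therefore $\psi=\mathrm{id}$ and $\phi=\Ad(U)$, as claimed.

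The step I expect to be the main obstacle is the angle‑preservation claim of the second paragraph: it is the only place where the spectrum hypothesis on genuinely non‑projection operators is used, and it hinges on pinning down $\phi(C_{N,t})$ exactly (combining the spectral constraint $\sigma(\phi(C_{N,t}))=\{0,N,t\}$ with the two order sandwiches) and then on the threshold comparison. The appeal to Uhlhorn's theorem --- and in particular the case $\dim H=2$, where orthogonality‑preservation alone is not enough and one really needs the full strength of angle‑preservation --- also requires care, while the order‑interval and rank computations, the $2\times2$ positivity checks, and the generalized‑inverse identity are routine.
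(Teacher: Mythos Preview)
There is a genuine gap: you read ``preserves order'' as the two--sided condition $A\le B\iff\phi(A)\le\phi(B)$, and this assumption is load--bearing in at least three places. The paper uses ``preserves order'' in the one--directional sense (see the paragraph after Theorem~\ref{semrl}, where it explicitly contrasts this with Moln\'ar's theorem on maps preserving order \emph{in both directions}); indeed, if the hypothesis were two--sided then by Moln\'ar's result the spectrum condition would be superfluous for $\dim H>1$, trivialising the theorem. With only the forward implication $A\le B\Rightarrow\phi(A)\le\phi(B)$ you lose: (i) injectivity, which you invoke to get an order automorphism of the projection lattice and hence rank preservation; (ii) the equivalence of the two threshold conditions in your $C_{N,t}$ argument --- you only get that $P_x\le C_{N,t}$ implies $\phi(P_x)\le\phi(C_{N,t})$, hence only $\theta'\le\theta$, i.e.\ $\Tr(\phi(P)\phi(R))\ge\Tr(PR)$, not equality; and (iii) the biconditional $\lambda P_v\le A\iff\lambda P_v\le\psi(A)$ in your final identification step.

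What survives of your argument with the correct one--sided hypothesis is in fact close to \v{S}emrl's route as reflected in the paper. Your $C_{N,t}$ computation is a legitimate alternative to the paper's Lemma~\ref{tR <=} (the $\alpha(P,R)=2/(2-\Tr(PR))$ device with $Q+2P$), and both yield the same one--sided conclusion $\Tr(\phi(P)\phi(R))\ge\Tr(PR)$, hence $\phi(P)\perp\phi(Q)\Rightarrow P\perp Q$. From there the paper does not appeal to Uhlhorn directly but to the strengthened version in Proposition~\ref{preserves orthogonality}, applied to $(\phi|_{\mathcal{P}_1})^{-1}$ after separately establishing that $\phi|_{\mathcal{P}_1}$ is a bijection; you would need an analogous bijectivity argument using surjectivity. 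Finally, the paper's extension to all of $\B(H)_+$ proceeds via operators of the form $tI-cP$ and sandwiching, rather than via your $\langle A^{-1}v,v\rangle^{-1}$ formula --- the latter is elegant but, as written, again relies on the two--sided order hypothesis you do not have.
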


Recall that an antiunitary map $U : H \to H$ is defined as an antilinear map such that for all $x,y \in H$ we have $\inner{Ux}{Uy} = \overline{\inner{x}{y}}$. Every such map has an autiunitary adjoint $U^* : H \to H$ uniquely characterized by the property $\inner{Ux}{y} = \overline{\inner{x}{U^*y}}$ for all $x,y \in H$.

We originally considered an analogous problem to Theorem \ref{semrl} with the condition of surjectivity reduced to some sort of local surjectivity. This can be achieved by restricting the problem to finite dimensional spaces and requiring injectivity and continuity of the map $\phi$, whence the invariance of domain theorem can be applied providing that $\phi$ is a homeomorphism on every open subset. We then noticed that it suffices to require spectrum shrinking instead of spectrum preserving; the same result follows since $\phi^{-1}$ has to preserve spectrum on the dense set of matrices with $n$ distinct eigenvalues so it was easy to show that $\phi$ in fact preserves spectrum everywhere.

However, when trying to show optimality of such a result via counterexamples, we noticed that continuity can in fact be shown from other assumptions, which yielded the final version of our main result:

\begin{theorem}\label{main result}
Let $H_n^{\ge 0}$ be the set of $n\times n$ positive complex matrices and let $\phi : H_n^{\ge 0} \to H_n^{\ge 0}$ be an injective map which preserves order and shrinks spectrum. Then there exists a unitary or antiunitary map $U : \C^n \to \C^n$ such that
$$\phi(A) = UAU^*, \quad \text{ for all }A \in H_n^{\ge 0}.$$
\end{theorem}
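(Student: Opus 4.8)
The plan is to extract from the hypotheses enough rigidity to reduce the problem, via the invariance of domain theorem, to a local, finite-dimensional version of the setting of Theorem~\ref{semrl}. I would begin with some elementary consequences that require neither continuity nor surjectivity. Applying spectrum-shrinking to scalar matrices gives $\phi(tI)=tI$ for all $t\ge 0$, whence $A\le\|A\|I$ yields $\|\phi(A)\|\le\|A\|$; in particular $\phi$ carries each order interval $[0,\|A\|I]$ into itself. For a rank-$r$ projection $P$, embedding $P$ in a maximal chain of projections and using that $tP_j$ has spectrum in $\{0,t\}$, that its image is therefore $t$ times a projection, and that injectivity keeps the images strictly increasing, forces $\phi(tP_j)$ to be $t$ times a projection of rank exactly $j$; so $\phi$ maps rank-$r$ projections to rank-$r$ projections, $\phi(I)=I$, and moreover $\phi(sP)=s\,\phi(P)$ for all $s\ge 0$ (for $0\le s\le s'$ the rank-$r$ supports of $\phi(sP)$ and $\phi(s'P)$ are nested, hence equal). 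Thus $\phi$ is already pinned down, and positively homogeneous, on the cone of scaled projections.

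The hard part will be to establish that $\phi$ is continuous, and this is where I expect the real work. Since $\phi$ is order-preserving, it admits one-sided limits along monotone sequences and along line segments in the cone of positive-definite matrices, and each such limit is again a positive matrix whose spectrum, by spectrum-shrinking, is confined to a prescribed finite set. The task is to rule out a genuine jump: I would argue that a jump is incompatible with the simultaneous constraints coming from (i) the very limited possible spectra (and multiplicities) of the one-sided limits, (ii) injectivity, and (iii) the already established fact that scalar matrices are fixed. It is precisely at this step that injectivity and \emph{spectrum-shrinking} (rather than spectrum-preserving) are used in an essential way, and the multiplicity bookkeeping for the one-sided limits is the delicate point.

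Granting continuity, I would invoke the invariance of domain theorem: injectivity then makes $\phi$ an open map on the open set of positive-definite matrices, hence a homeomorphism onto an open subset, and applied to the compact connected manifold of rank-$r$ projections it shows $\phi$ restricts to a self-homeomorphism there, so $\phi$ is bijective on projections of each rank. On the open dense set of matrices with $n$ distinct eigenvalues lying in the image, $\phi^{-1}$ must preserve spectrum (since $\sigma(\phi(A))\subseteq\sigma(A)$ and both then have $n$ points), and by continuity of $\phi$ and of the spectrum this upgrades $\phi$ to a spectrum-preserving map on all of $H_n^{\ge 0}$. Examining the pencils $sP+(I-P)$ with the order relation and spectrum-preservation shows $\phi$ preserves orthogonal complements of projections, so for $n\ge 3$ its restriction to the projection lattice is an order automorphism compatible with orthocomplementation; by the classical Wigner--Uhlhorn-type description of such automorphisms it coincides with $P\mapsto UPU^*$ for a unitary or antiunitary $U$. (The cases $n\le 1$ are trivial, and $n=2$ must be handled separately, using the full order and spectrum data, since there the projection lattice alone is not rigid enough.)

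Finally, after replacing $\phi$ by $A\mapsto U^*\phi(A)U$ I may assume $\phi$ fixes every projection, hence (by homogeneity) every scaled projection $tP$; I would then show that $\phi(A)=A$ for arbitrary positive $A$ with spectral decomposition $A=\sum_i\lambda_iE_i$, $\lambda_1>\cdots>\lambda_k\ge 0$. Indeed, each $\lambda_iE_i\le A$ gives $\lambda_iE_i=\phi(\lambda_iE_i)\le\phi(A)$; combined with $\|\phi(A)\|\le\|A\|=\lambda_1$ and $\sigma(\phi(A))\subseteq\sigma(A)$ this forces $\lambda_1$ to be the largest eigenvalue of $\phi(A)$ with eigenspace containing $\operatorname{ran}E_1$, and peeling off this eigenspace and iterating on its orthogonal complement yields $\phi(A)=A$. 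Hence $\phi(A)=UAU^*$ for all $A$; the same chain of arguments goes through with hermitian matrices in place of positive ones, giving the stated generalization.
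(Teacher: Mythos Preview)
Your overall architecture is close to the paper's: pin down $\phi$ on scaled projections, get continuity, apply invariance of domain, reduce to rank-one projections, invoke a Wigner--Uhlhorn type result, and finish by a sandwich argument. The first paragraph (scalar matrices, homogeneity on projections via chains and nested supports) is correct and matches the paper's opening claims.

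The genuine gap is the continuity step, and the difference from the paper is precisely in how this is handled. The paper does \emph{not} try to prove continuity directly from order-preservation, injectivity and spectrum-shrinking. Instead it first upgrades spectrum-shrinking to full characteristic-polynomial preservation, and only then deduces continuity. The mechanism is: one introduces the shifted maps $\phi_\alpha(A):=\phi(A+\alpha I)-\alpha I$, which again satisfy all the hypotheses, and uses them together with Lemma~\ref{projection equality} to show $\phi(\alpha P+\beta I)=\alpha\phi(P)+\beta I$ for every projection $P$. Hence $\phi$ preserves the characteristic polynomial on all matrices with at most two distinct eigenvalues, and then a sandwich with the Monotonicity theorem (bounding a general $A$ between two-eigenvalue matrices $B_i\le A\le C_i$) forces the characteristic polynomial of $\phi(A)$ to equal that of $A$. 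Once trace is preserved, continuity is a one-line estimate: $0\le \phi(A+\varepsilon I)-\phi(A)$ has trace $n\varepsilon$, hence norm at most $n\varepsilon$, and the order relation $\phi(A-\varepsilon I)\le\phi(C)\le\phi(A+\varepsilon I)$ for $\|C-A\|\le\varepsilon$ finishes it.

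Your plan reverses the order: you want continuity first and characteristic-polynomial preservation afterward. But your sketch for continuity (``one-sided limits along monotone segments'', ``rule out a jump using multiplicity bookkeeping and injectivity'') is not an argument. With only spectrum-shrinking you do not control the multiplicities of $\phi(A\pm\varepsilon I)$, so you have no bound on $\|\phi(A+\varepsilon I)-\phi(A-\varepsilon I)\|$; along the ray $t\mapsto\phi(A+tI)$ the eigenvalues are merely constrained to lie in $\sigma(A)+t$ and to be nondecreasing in $t$, which does not by itself preclude jumps, and injectivity alone does not visibly forbid them either. In short, the ``delicate point'' you flag is exactly the point where the paper supplies a concrete idea (the $\phi_\alpha$ trick and the two-eigenvalue sandwich) and your proposal does not. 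If you insert that step to get trace preservation first, the rest of your outline---invariance of domain on $H_n^{>0}$, bijectivity of $\phi$ on each $\mathcal{P}_k$ (either via the manifold argument you suggest or, as the paper does, via an $\varepsilon$-ball around $I$ and the formula $\phi(\varepsilon P+I)=\varepsilon\phi(P)+I$), the Uhlhorn-type identification on $\mathcal{P}_1$ for $n\ge3$, the separate $n=2$ argument, and your final eigenspace-peeling---goes through.
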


When dealing with maps on hermitian matrices, antiunitary maps can be altogether avoided. Indeed, we can reformulate the conclusion of Theorem \ref{main result} in the spirit of matrices only: there exists a unitary matrix $U \in U_n$ such that $\phi(A) = UAU^*, \forall A \ge 0$ or $\phi(A) = UA^tU^*, \forall A \ge 0$.


We also note that after showing continuity of the map $\phi$, the use of the invariance of domain theorem remains a crucial point of the proof. Indeed, one can wonder whether an analogous result can be formulated on infinite dimensional spaces, akin to Theorem \ref{semrl}. This turns out to be false even if we assume that $\phi$ is spectrum preserving. Indeed, let $H$ be an infinite-dimensional Hilbert space. Then $H \cong H \oplus H$, so let $U : H \to H \oplus H$ be a unitary map. Let $\varphi : \B(H)_+ \to \B(H)_+$ be an arbitrary map which preserves order and shrinks spectrum (a nice example is $\varphi(X) := \norm{X}I$). Define
$$\phi : \B(H)_+ \to \B(H)_+, \quad \phi(A) := U^*(A \oplus \varphi(A))U, \quad \text{ for all }A \ge 0.$$
Then $\phi$ is an injective map which preserves order and spectrum, but is not of the form as stated in Theorem \ref{main result}. Indeed, $\phi$ is clearly not surjective since the image of $U\phi(\cdot)U^*$ is contained in the set of diagonal operators on $H \oplus H$.

Moreover, the above map $\phi$ preserves order in both directions, i.e. satisfies $A \le B \iff \phi(A) \le \phi(B)$. In this context, it is interesting  that a surjective map $\phi : \B(H)_+ \to \B(H)_+$ where $\dim H > 1$ which preserves order in both directions is automatically of the form given in Theorem \ref{semrl} (\cite[Theorem 2.5.1]{Molnar}), even without assuming that $\phi$ preserves spectrum. Note that injectivity is immediate from preserving order in both directions, but as the above example shows, injectivity alone is not sufficient, even if we assume that $\phi$ preserves spectrum.

\section{Preliminaries}\label{sec:prel}

 We begin this section by introducing some notation and terminology. Let $n \in\N$.
\begin{itemize}
    \item $M_n := M_n(\C)$ denotes the set of all $n\times n$ complex matrices,
    \item $H_n$ denotes the set of hermitian matrices in $M_n$,
    \item $H_n^{\ge 0}$ denotes the set of positive matrices in $M_n$,
    \item $H_n^{> 0}$ denotes the set of strictly positive matrices in $M_n$,
    \item $U_n$ denotes the set of unitary matrices in $M_n$,
    \item $\mathcal{P}$ denotes the set of orthogonal projections in $M_n$,
    \item $\mathcal{P}_k$ denotes the set of orthogonal projections in $M_n$ of rank $k$,
    \item For $A,B \in M_n$ we say that $A$ is perpendicular to $B$ (and write $A \perp B$) if $AB = BA = 0$,
    \item For $A \in M_n$ by $r(A)$ we denote the rank of $A$,
    \item For $A \in M_n$ by $\nu(A)$ we denote the spectral radius of $A$,
    \item For $A \in H_n$ by $\lambda_{\mathrm{min}}(A)$ and $\lambda_{\mathrm{max}}(A)$ we denote the smallest and largest eigenvalue of $A$, respectively,
    \item For $A \in M_n$ we denote by $R(A)$ the image of $A$ and by $N(A)$ the nullspace of $A$.
\end{itemize}

Of great relevance for our proof is the Monotonicity theorem for hermitian matrices, proof of which can be found in \cite[Corollary 4.3.12]{HornJohnson}:

\begin{theorem}[Monotonicity theorem]\label{monotonicity theorem}
Let $A, B \in H_n$ and denote their eigenvalues in increasing order by $\lambda_1(A), \ldots, \lambda_n(A)$ and $\lambda_1(B), \ldots, \lambda_n(B)$ respectively. Then if $A \le B$ we have
$$\lambda_j(A) \le \lambda_j(B), \quad \text{ for all $1 \le j \le n$}.$$
If $A < B$, all the inequalities are strict.
\end{theorem}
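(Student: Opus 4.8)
The plan is to prove both statements from the Courant--Fischer min--max characterization of eigenvalues, which expresses each $\lambda_j$ as a constrained optimum of the associated quadratic form and is therefore tailor-made for comparing two forms that are pointwise ordered. The version I would use, for $A \in H_n$ with eigenvalues listed in increasing order, is
$$\lambda_j(A) = \min_{\dim S = j} \; \max_{\substack{x \in S \\ \norm{x} = 1}} \langle Ax, x\rangle, \qquad 1 \le j \le n,$$
where $S$ ranges over all $j$-dimensional subspaces of $\C^n$. If a self-contained treatment is wanted, this identity follows from the spectral theorem: fixing an orthonormal eigenbasis $e_1, \ldots, e_n$ with $Ae_i = \lambda_i(A)e_i$, the bound ``$\le$'' is obtained by testing $S = \spn\{e_1, \ldots, e_j\}$, on which the form is $\le \lambda_j(A)$, while ``$\ge$'' follows because any $j$-dimensional $S$ meets $\spn\{e_j, \ldots, e_n\}$ nontrivially (the dimensions sum to $n+1$), and on that span the form is $\ge \lambda_j(A)$.

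For the non-strict statement I would argue directly. Assuming $A \le B$, i.e. $B - A \ge 0$, we have $\langle Ax, x\rangle \le \langle Bx, x\rangle$ for every $x$. Hence for each fixed $j$-dimensional subspace $S$ the inner maximum over $S$ of the quadratic form of $A$ is at most that of $B$, and taking the minimum over all such $S$ preserves this inequality, giving $\lambda_j(A) \le \lambda_j(B)$ for all $j$.

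For the strict statement, assume $A < B$, i.e. $B - A > 0$, and set $\eps := \lambda_{\mathrm{min}}(B - A) > 0$, so that $B \ge A + \eps I$. Since replacing $A$ by $A + \eps I$ merely adds $\eps$ to the quadratic form uniformly, the min--max formula yields $\lambda_j(A + \eps I) = \lambda_j(A) + \eps$ for every $j$. Applying the already-established non-strict case to $A + \eps I \le B$ then gives $\lambda_j(B) \ge \lambda_j(A) + \eps > \lambda_j(A)$, as required.

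The only substantive ingredient is the Courant--Fischer identity itself; granted it, the non-strict inequality is a one-line consequence. The point I would be most careful about is the strict case: although $A < B$ yields $\langle Ax, x\rangle < \langle Bx, x\rangle$ for every nonzero $x$, strictness is not automatically preserved by the outer minimization over subspaces, since an infimum of strict inequalities may fail to be strict. I sidestep this by passing to the uniform gap $\eps I$ as above (alternatively one could invoke compactness of the Grassmannian of $j$-dimensional subspaces to extract a positive gap), which reduces the strict statement cleanly to the non-strict one.
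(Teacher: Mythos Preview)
Your proof is correct. The Courant--Fischer argument is standard and your handling of the strict case via the uniform gap $\eps = \lambda_{\mathrm{min}}(B-A)$ is clean and avoids the pitfall you correctly flag about strict inequalities not surviving an infimum.

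As for comparison: the paper does not actually prove this theorem. It states the result and refers the reader to \cite[Corollary 4.3.12]{HornJohnson} for the proof. In Horn and Johnson the monotonicity statement is obtained as a corollary of Weyl's inequalities (themselves proved from Courant--Fischer), namely $\lambda_j(A) + \lambda_1(C) \le \lambda_j(A+C)$ applied with $C = B - A \ge 0$; the strict case follows since $\lambda_1(C) > 0$ when $C > 0$. Your argument bypasses Weyl and works directly from the min--max formula, which is marginally more elementary but amounts to the same thing: the underlying engine in both routes is Courant--Fischer. So your approach is in substance the same as the one behind the cited reference, just streamlined for this particular corollary.
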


We also need the next simple lemma:

\begin{lemma}\label{projection equality}
Let $P,Q \in \mathcal{P}$ such that $r(P) = r(Q)$ and assume there exists a scalar $\alpha \ge 0$ such that $(\alpha+1)P \le Q+ \alpha I$. Then $P = Q$.
\end{lemma}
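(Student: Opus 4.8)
The plan is to diagonalize $Q$ and analyze the inequality $(\alpha+1)P \le Q + \alpha I$ eigenvalue by eigenvalue, then conclude that the range of $P$ must coincide with the range of $Q$. Write $k := r(P) = r(Q)$. Since $Q$ is an orthogonal projection of rank $k$, its eigenvalues (in increasing order) are $\lambda_1(Q) = \cdots = \lambda_{n-k}(Q) = 0$ and $\lambda_{n-k+1}(Q) = \cdots = \lambda_n(Q) = 1$, so the eigenvalues of $Q + \alpha I$ are $\alpha$ (with multiplicity $n-k$) and $\alpha+1$ (with multiplicity $k$). Similarly the eigenvalues of $(\alpha+1)P$ are $0$ (multiplicity $n-k$) and $\alpha+1$ (multiplicity $k$). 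Applying the Monotonicity theorem (Theorem~\ref{monotonicity theorem}) to $(\alpha+1)P \le Q+\alpha I$ gives $\lambda_j((\alpha+1)P) \le \lambda_j(Q+\alpha I)$ for all $j$; for $j > n-k$ this reads $\alpha+1 \le \alpha+1$, which carries no information, but the content will come from comparing the geometric eigenspaces directly.

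The key step is the following observation. Let $x$ be any unit vector in $R(P)$. Then $\langle (\alpha+1)Px, x\rangle = \alpha+1$, so the inequality forces $\langle (Q+\alpha I)x, x\rangle \ge \alpha+1$, i.e. $\langle Qx, x\rangle \ge 1$. Since $Q$ is a projection we have $\langle Qx,x\rangle = \norm{Qx}^2 \le \norm{x}^2 = 1$, hence $\langle Qx, x\rangle = 1 = \norm{x}^2$, which means $Qx = x$, i.e. $x \in R(Q)$. Therefore $R(P) \subseteq R(Q)$. Since $r(P) = r(Q) = k < \infty$, the inclusion of subspaces of equal dimension is an equality, $R(P) = R(Q)$. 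As $P$ and $Q$ are the orthogonal projections onto these equal subspaces, we conclude $P = Q$.

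The argument is entirely elementary and there is no real obstacle; the only point requiring a little care is the case $\alpha = 0$, where the hypothesis reads $P \le Q$ — but the vector computation above still applies verbatim ($\langle Px,x\rangle = 1$ forces $\langle Qx,x\rangle \ge 1$), so nothing special is needed. One could alternatively phrase the whole thing via traces: from $(\alpha+1)P \le Q+\alpha I$ one gets $(\alpha+1)\tr(P) \le \tr(Q) + \alpha n$, but since $\tr(P) = \tr(Q) = k$ this only yields $k \le n$, so the trace route is too lossy and the pointwise-on-$R(P)$ argument is the right one. I expect the Monotonicity theorem is cited here mainly for uniformity with its later uses; the self-contained quadratic-form computation is cleaner and is what I would actually write.
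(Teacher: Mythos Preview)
Your proof is correct and essentially identical to the paper's: both pick a unit vector $x \in R(P)$, use the hypothesis to get $\langle Qx,x\rangle \ge 1$, and conclude $Qx = x$, hence $R(P)\subseteq R(Q)$ and $P=Q$ by equality of ranks. The only cosmetic difference is that the paper reaches $Qx=x$ via the equality case of Cauchy--Schwarz (forcing $Qx=\lambda x$ with $\lambda\in\{0,1\}$), whereas you use $\langle Qx,x\rangle=\|Qx\|^2$ and the Pythagorean decomposition; your opening paragraph on the Monotonicity theorem is, as you yourself note, unnecessary and can be dropped.
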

\begin{proof}
Let $x \in R(P), \norm{x}=1$ be arbitrary. Then $Px = x$, so 
$$\alpha + 1 = (\alpha+1)\inner{Px}{x} \le \inner{(Q+\alpha I)x}{x} = \inner{Qx}{x} + \alpha \inner{x}{x} = \inner{Qx}{x} + \alpha$$
which implies $\inner{Qx}{x} \ge 1$. By the Cauchy-Schwarz inequality we obtain
$$1 \le \inner{Qx}{x} \le \norm{Qx}\norm{x} \le \norm{Q}\norm{x}^2 \le 1$$
and hence $Qx = \lambda x$ for some $\lambda \in \C$. Since $x \ne 0$, it must be $\lambda \in \{0,1\}$ but it cannot be $\lambda = 0$ because of $\inner{Qx}{x} \ge 1$. Therefore $\lambda = 1$ and hence $Qx = x$ meaning $x \in R(Q)$. It follows that $R(P) \subseteq R(Q)$, so by the equality of ranks we conclude $P = Q$.
\end{proof}

Finally, we present a few auxiliary results from \cite{semrl} which will be used in the latter part of the proof.
\begin{lemma}[\v{S}emrl]\label{tR <=}
Assume that $P,Q,R \in \mathcal{P}_1$ satisfy $P \perp Q$ and $R \le P+Q$. Then
$$\alpha(P,R) := \max\{t \in [0,+\infty\rangle : tR \le Q+2P\} = \frac{2}{2-\Tr(PR)}.$$
\end{lemma}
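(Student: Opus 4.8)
Since $R \le P+Q$ with $P \perp Q$, the range of $R$ lies inside the two-dimensional space $R(P)\oplus R(Q)$. I would first reduce to this $2$-dimensional subspace: outside it everything is zero, so the inequality $tR \le Q + 2P$ need only be checked on $\mathrm{span}\{R(P),R(Q)\}$, which we identify with $\C^2$ by choosing unit vectors $e_1 \in R(P)$, $e_2 \in R(Q)$. In this basis $P = \mathrm{diag}(1,0)$, $Q = \mathrm{diag}(0,1)$, so $Q+2P = \mathrm{diag}(2,1)$, and $R$ is the rank-one projection onto a unit vector $v = (\cos\theta, \, e^{i\psi}\sin\theta)$ for suitable angles; the phase $\psi$ can be absorbed by a diagonal unitary, which commutes with $P$ and $Q$ and hence does not change the problem, so I may take $v = (c,s)$ with $c=\cos\theta \ge 0$, $s=\sin\theta\ge 0$. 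Then $\Tr(PR) = |\inner{v}{e_1}|^2 = c^2$, and I must show $\alpha(P,R) = 2/(2-c^2)$.

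**The eigenvalue inequality.**
The condition $tR \le Q+2P$ is equivalent to $Q + 2P - tR \ge 0$, i.e. the $2\times 2$ hermitian matrix
$$\begin{bmatrix} 2 - tc^2 & -tcs \\ -tcs & 1 - ts^2 \end{bmatrix}$$
being positive semidefinite. A $2\times 2$ hermitian matrix is positive semidefinite iff its trace and determinant are both nonnegative. The trace is $3 - t$, and the determinant is $(2-tc^2)(1-ts^2) - t^2c^2s^2 = 2 - t(2s^2 + c^2) + t^2c^2s^2 - t^2c^2s^2 = 2 - t(1 + s^2) = 2 - t(2 - c^2)$. Hence the determinant is nonnegative iff $t \le 2/(2-c^2)$, and since $2-c^2 \ge 1$ this bound is at most $2$, so the trace condition $t \le 3$ is automatically satisfied. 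Therefore the maximal admissible $t$ is exactly $2/(2-c^2) = 2/(2-\Tr(PR))$, which is the claim; note the maximum is attained since the set of valid $t$ is closed.

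**Remarks on difficulty.**
There is essentially no obstacle here — the only mild subtlety is the reduction to the invariant $2$-dimensional subspace and the observation that a global phase on the generating vector of $R$ is irrelevant because diagonal unitaries commute with both $P$ and $Q$. One should also double-check the degenerate cases: if $c = 0$ then $R \le Q$ and the bound $2/(2-0) = 1$ is clearly right (indeed $tR \le Q \le Q+2P$ iff $t\le 1$), while if $c=1$ then $R = P$ and the bound is $2/(2-1)=2$, matching $tP \le 2P$ iff $t \le 2$. Finally, one may wish to record that $\alpha(P,R)$ is well-defined as a genuine maximum (not just a supremum), which follows from the closedness just noted, and that $\alpha(P,R) \in [1,2]$ with the endpoints characterizing $R \le Q$ and $R = P$ respectively.
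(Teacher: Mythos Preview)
Your proof is correct. The paper does not actually prove this lemma: it is quoted as an auxiliary result from \v{S}emrl's paper \cite{semrl}, with only the degenerate case $P=R$ verified in a remark (to justify the notation $\alpha(P,R)$), so there is no in-paper argument to compare against; your direct $2\times 2$ determinant computation is the natural approach and matches the special case the paper does check.
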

Note here that if $P = R$, then $$tP \le Q+2P \iff (t-2)P \le Q \stackrel{\iff}{} t-2 \le 0 \iff t \le 2 = \frac{2}{2-\Tr P^2}$$
so $\alpha(P,P) = 2$ does not depend on $Q$. On the other hand, if $P \ne R$, then $Q$ is uniquely determined as an orthogonal projection of rank $1$ with the property $P \perp Q$ and $R \le P+Q$ (i.e. $R-P \le Q$). Hence the notation $\alpha(P,R)$ does not include $Q$.

We also state a stronger version of Uhlhorn's theorem from \cite{EffectAlgebras}:

\begin{proposition}[\v{S}emrl]\label{preserves orthogonality}
Let $H$ be a Hilbert space with $\dim H \ge 3$. Let $\phi : \mathcal{P}_1\to \mathcal{P}_1$ be an injective map such that for every maximal orthogonal subset $S \subseteq \mathcal{P}_1$ the set $\phi(S)$ is also a maximal orthogonal subset of $\mathcal{P}_1$. Then there exists a unitary or antiunitary operator $U : H\to H$ such that $$\phi(P) = UPU^*, \quad \text{for all }P \in \mathcal{P}_1.$$
\end{proposition}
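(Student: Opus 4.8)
The plan is to strengthen the hypothesis into honest orthogonality preservation in both directions and then to feed this into the fundamental theorem of projective geometry. Throughout I would identify a maximal orthogonal subset of $\mathcal{P}_1$ with the family of rank-one projections onto the vectors of an orthonormal basis of $H$. The first, easy step is to check that $\phi$ preserves orthogonality in the forward direction: given $P, Q \in \mathcal{P}_1$ with $P \perp Q$ and $P \ne Q$, I would extend $\{P,Q\}$ to a maximal orthogonal subset $S$; by hypothesis $\phi(S)$ is again maximal orthogonal, and by injectivity $\phi(P)$ and $\phi(Q)$ are two distinct members of it, hence $\phi(P) \perp \phi(Q)$.

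The main obstacle is the converse, that $\phi$ \emph{reflects} orthogonality: if $\phi(P) \perp \phi(Q)$ then $P \perp Q$. Here I would argue by contradiction. Assuming $P \not\perp Q$ (note $P \ne Q$ automatically, since a nonzero projection is never orthogonal to itself), the ranges of $P$ and $Q$ span a two-dimensional space $\Pi$, inside which there is a unique $P' \in \mathcal{P}_1$ with $P' \perp P$; crucially $P' \ne Q$, because $Q \not\perp P$ while $P' \perp P$. I would extend $\{P,P'\}$ to a maximal orthogonal subset $S = \{P,P'\} \cup S_0$ and set $\Pi' := R(\phi(P)) + R(\phi(P'))$. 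The key observation is that any rank-one projection with range in $\Pi$ is orthogonal to every element of $S_0$, so by the first step its image is orthogonal to every element of $\phi(S_0)$; since $\phi(S)$ is maximal orthogonal, the orthogonal complement of $\phi(S_0)$ is exactly the plane $\Pi'$, forcing that image into $\Pi'$. Applying this to $Q$ and to $P'$, I would find that $\phi(Q)$ and $\phi(P')$ both lie in the two-dimensional space $\Pi'$ and are both orthogonal to $\phi(P)$; but a line has a unique orthocomplement inside a two-dimensional space, so $\phi(Q) = \phi(P')$, contradicting injectivity since $Q \ne P'$. This reduction to dimension two is where I expect the real work to lie, and it is the step that genuinely exploits both the injectivity and the maximality hypotheses rather than mere orthogonality preservation.

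With both implications in hand, $\phi$ becomes an injective map on $\mathcal{P}_1$ satisfying $P \perp Q \iff \phi(P) \perp \phi(Q)$ on a space of dimension at least three. At this point I would invoke the fundamental theorem of projective geometry in its orthogonality-preserving (non-surjective Uhlhorn) form to produce a linear or conjugate-linear isometry $V : H \to H$ with $\phi(P) = VPV^*$ for all $P$; this is the standard step where the hypothesis $\dim H \ge 3$ is indispensable, as the conclusion fails in dimension two. Finally I would upgrade $V$ to a (co-)unitary by feeding the maximality hypothesis in once more: choosing any orthonormal basis $(e_i)$ of $H$, its projections form a maximal orthogonal subset, so their images, the projections onto the vectors $Ve_i$, again form a maximal orthogonal subset, i.e. $(Ve_i)$ is an orthonormal basis of $H$. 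Hence $V$ is surjective, therefore unitary or antiunitary, and $U := V$ is the required operator.
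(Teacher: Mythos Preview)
The paper does not prove this proposition; it is quoted from \v{S}emrl's paper \cite{EffectAlgebras} and used as a black box in the proof of Theorem~\ref{main result}. So there is no in-paper argument to compare against, and your proposal should be judged on its own merits.

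Your reduction is correct. Step~1 is immediate, and Step~2 is the heart of the matter: the two-dimensional trap via $\Pi$ and $\Pi'$ genuinely uses both injectivity and the \emph{maximality} of $\phi(S)$ (to identify the orthogonal complement of $\phi(S_0)$ with the plane $\Pi'$), and the uniqueness of the orthocomplement line inside $\Pi'$ forces $\phi(Q)=\phi(P')$, giving the contradiction. This step is sound as written. Step~4 is also fine: once $\phi(P)=VPV^*$ for an isometry $V$, the maximality hypothesis applied to the projections of an orthonormal basis shows $(Ve_i)$ is complete, so $V$ is onto.

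The only point deserving a caveat is Step~3. What you are invoking---that an injection on $\mathcal{P}_1$ with $P\perp Q \iff \phi(P)\perp\phi(Q)$ and $\dim H\ge 3$ is implemented by a linear or conjugate-linear isometry---is a correct and quotable theorem (it follows, for instance, from the non-surjective fundamental theorem of projective geometry of Faure--Fr\"olicher, since two-sided orthogonality preservation forces two-sided collinearity preservation, and the image contains a full orthonormal frame so is not contained in a line). But you should be aware that this ``non-surjective Uhlhorn'' statement is itself of the same family as the proposition you are proving, and in \v{S}emrl's original treatment the two are closely intertwined. Your argument is therefore best read as a clean reduction of the maximal-orthogonal-set hypothesis to the two-sided orthogonality hypothesis, followed by an appeal to a standard (but nontrivial) structural theorem, rather than as a fully self-contained proof.
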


\section{Proof of the main result}

Recall that a continuous map $H_n^{\ge 0} \to H_n^{\ge 0}$ which preserves spectrum in fact preserves the characteristic polynomial; namely, the characteristic polynomial is clearly preserved on the set of matrices with $n$ distinct eigenvalues, which is dense in $H_n^{\ge 0}$. It is interesting to note that continuity can be replaced by the order preserving property:

\begin{proposition}\label{continuity proof}
Let $\phi : H_n^{\ge 0} \to H_n^{\ge 0}$ be a map which preserves order and spectrum. Then $\phi$ is continuous and preserves characteristic polynomial.
\end{proposition}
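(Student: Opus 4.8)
The plan is to deduce continuity from the order-preserving and spectrum-preserving hypotheses by a "squeeze" argument on monotone nets, and then obtain preservation of the characteristic polynomial by the density argument already recalled before the statement. Since $\phi$ preserves spectrum, it preserves $\lambda_{\min}$ and $\lambda_{\max}$, hence also the operator norm $\norm{\phi(A)} = \norm{A}$; moreover $\phi$ maps $0$ to $0$ (the only positive matrix with spectrum $\{0\}$) and is order preserving, so it carries the order interval $[0,A] = \{X : 0 \le X \le A\}$ into $[0,\phi(A)]$. The first key step is therefore: if $A_k \to A$ in $H_n^{\ge 0}$, then one can interpolate the convergence by squeezing. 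Concretely, fix $A$ and $\varepsilon > 0$; since eventually $A_k \le A + \varepsilon I$ and $A_k \ge A - \varepsilon I$ (using $A - \varepsilon I \le A_k \le A + \varepsilon I$ once $\norm{A_k - A} < \varepsilon$, but we must be careful that $A - \varepsilon I$ need not be positive), we replace $A - \varepsilon I$ by its positive part or, more simply, work with $B_k := A_k + \varepsilon I$ which satisfies $A \le B_k \le A + 2\varepsilon I$ eventually if we also know $B_k \ge A$, which however is not automatic.

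Because the naive two-sided squeeze fails (order intervals from below need not shrink nicely), I expect the actual argument to go through \emph{monotone} sequences first, where order preservation is powerful: if $A_k \uparrow A$ (increasing, with supremum $A$), then $\phi(A_k)$ is increasing and bounded above by $\phi(A)$, so $\phi(A_k) \uparrow C$ for some $C \le \phi(A)$; one then argues $C = \phi(A)$ by comparing spectra — e.g. $\lambda_{\max}(\phi(A_k)) = \lambda_{\max}(A_k) \to \lambda_{\max}(A) = \lambda_{\max}(\phi(A))$ forces $\lambda_{\max}(C) = \lambda_{\max}(\phi(A))$, and more refined use of the Monotonicity theorem (Theorem \ref{monotonicity theorem}) applied to all eigenvalues, together with $C \le \phi(A)$, pins down $C = \phi(A)$. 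Dually one handles $A_k \downarrow A$. The general case then follows by a sandwiching: given arbitrary $A_k \to A$, set $\varepsilon_k := \norm{A_k - A} \to 0$, so that $A - \varepsilon_k I \le A_k \le A + \varepsilon_k I$; one then needs the monotone continuity established above plus the order-preservation to trap $\phi(A_k)$ between $\phi$ of a decreasing sequence converging to $\phi(A)$ from above and (the subtle part) something converging to $\phi(A)$ from below — here one can use that $\phi$ applied to $(A - \varepsilon_k I)_+ \nearrow A$ is handled by the monotone case, while $(A - \varepsilon_k I)_+ \le A_k$ gives the lower bound, and $A_k \le A + \varepsilon_k I \searrow A$ gives the upper bound.

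Once continuity is in hand, the second part is immediate: the set of matrices in $H_n^{\ge 0}$ with $n$ distinct eigenvalues is dense, on that set spectrum-preservation forces the characteristic polynomial (a monic degree-$n$ polynomial determined by its roots with multiplicity, which here are simple) to be preserved, and the characteristic polynomial is a continuous function of the matrix, so by density and continuity of both $\phi$ and $A \mapsto \det(\lambda I - A)$ the equality of characteristic polynomials extends to all of $H_n^{\ge 0}$. The main obstacle I anticipate is the lower-bound half of the squeeze: order intervals $[0, \phi(A)]$ do not metrically shrink as $\phi(A)$ is approached from below in norm, so the continuity cannot be read off from monotonicity of a single bounding sequence; the fix is to exploit the \emph{two}-sided trapping between a decreasing sequence and an increasing sequence both converging to $A$, reducing everything to the monotone case where the Monotonicity theorem does the work. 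A secondary technical point is ensuring the relevant lower perturbations stay inside $H_n^{\ge 0}$, which is why one passes to positive parts $(A - \varepsilon_k I)_+$ rather than $A - \varepsilon_k I$ itself.
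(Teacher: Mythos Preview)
Your proposal has the two halves of the argument in the opposite order from the paper, and both halves of your continuity argument contain genuine gaps.

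First, the monotone step: from $A_k \uparrow A$ you correctly get $\phi(A_k) \uparrow C \le \phi(A)$, but the claim that ``comparing spectra \ldots\ pins down $C = \phi(A)$'' does not go through. Spectrum preservation only gives $\sigma(\phi(A)) = \sigma(A)$ as \emph{sets}; if $A$ has eigenvalue list $(1,1,2)$ it is a priori consistent with $\phi(A)$ having list $(1,2,2)$, and then a $C$ with list $(1,1,2)$ can satisfy $C \le \phi(A)$ (all the inequalities in Theorem~\ref{monotonicity theorem} hold) without $C = \phi(A)$. To rescue this you would need to already know that $\phi$ preserves eigenvalues \emph{with multiplicity} (equivalently, the trace), which is precisely the characteristic-polynomial statement you were postponing.

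Second, the lower half of your general squeeze is false: from $A - \varepsilon_k I \le A_k$ and $A_k \ge 0$ one \emph{cannot} conclude $(A - \varepsilon_k I)_+ \le A_k$. A concrete $2\times 2$ counterexample is $A - \varepsilon I = \diag(1,-1)$ and $A_k = \begin{pmatrix} 2 & \sqrt{2}\\ \sqrt{2} & 1\end{pmatrix}$: then $A_k \ge 0$ and $A_k \ge A - \varepsilon I$, but $A_k - (A - \varepsilon I)_+ = \begin{pmatrix} 1 & \sqrt{2}\\ \sqrt{2} & 1\end{pmatrix}$ has determinant $-1$. The hermitian cone is not a lattice, so functional-calculus positive parts do not interact with the Loewner order in the way your squeeze requires.

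The paper avoids both problems by reversing the logic. It proves characteristic-polynomial preservation \emph{first}, with no continuity at all: given $A$, one manufactures $B \le A \le C$ (commuting with $A$) whose eigenvalue lists are arranged so that Theorem~\ref{monotonicity theorem} applied to $\phi(B) \le \phi(A) \le \phi(C)$ traps each $\lambda_j(\phi(A))$ in the singleton $\{\lambda_j(A)\}$. This immediately gives trace preservation, after which continuity is a one-line estimate,
\[
\norm{\phi(A+\varepsilon I) - \phi(A)} \le \Tr\bigl(\phi(A+\varepsilon I) - \phi(A)\bigr) = \Tr(\varepsilon I) = n\varepsilon,
\]
making the two-sided squeeze trivial on $H_n^{>0}$ and then on all of $H_n^{\ge 0}$ via a shift $X \mapsto X + \tfrac{\varepsilon}{3n}I$.
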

\begin{proof}
The proof is divided into the next series of claims.
\begin{claim}
$\phi$ preserves characteristic polynomial on $H_n^{> 0}$.
\end{claim}
\begin{proof}
Let $A > 0$ be arbitrary and let $0 < \mu_1 < \cdots < \mu_k$ be its eigenvalues with multiplicities $r_1, \ldots, r_k \ge 1$ respectively. Then $\phi(A)$ has the same eigenvalues $\mu_1, \ldots, \mu_k$. We claim that the respective multiplicities are also the same.

Set $\mu_0 = 0$ and $\mu_{k+1}=+\infty$. Choose any two matrices $B,C > 0$ such that $B \le A \le C$ and for all $1 \le i \le n, 1 \le j \le r_i$ we have that
$$\lambda_{r_1+\cdots + r_{i-1} + 1}(B), \ldots, \lambda_{r_1+\cdots + r_{i-1} + r_i}(B)$$
are distinct elements of $ \langle \mu_{i-1}, \mu_i]$ and that
$$\lambda_{r_1+\cdots + r_{i-1} + 1}(C), \ldots, \lambda_{r_1+\cdots + r_{i-1} + r_i}(C)$$
are distinct elements of $[\mu_i, \mu_{i+1}\rangle$.

For concreteness, by diagonalization for each $1 \le i \le k$ we can pick mutually orthogonal rank-one projections $P_{i1}, \ldots, P_{ir_i} \in \mathcal{P}_1$ such that
$$A = \sum_{i=1}^k \mu_{i}\sum_{j=1}^{r_i} P_{ij}.$$
For $1 \le i \le k$ pick $\varepsilon_i > 0$ such that $\mu_i - (r_i-1)\varepsilon_i > \mu_{i-1}$ and then define
$$B := \sum_{i=1}^k \sum_{j=1}^{r_i} (\mu_i - (r_i-j)\varepsilon_i)P_{ij}.$$
Similarly, for $1 \le i \le k$ pick $\eta_i > 0$ such that $\mu_i + (r_i-1)\eta_i < \mu_{i+1}$ and then define
$$C := \sum_{i=1}^k \sum_{j=1}^{r_i} (\mu_i + (j-1)\eta_i)P_{ij}.$$
Then $B$ and $C$ have $n$ distinct eigenvalues and we have
$$B \le A \le C \implies \phi(B) \le \phi(A) \le \phi(C).$$
We have $k_{\phi(B)} = k_B$ and $k_{\phi(C)} = k_C$ and therefore for each $1 \le i \le k$ and $1 \le j \le r_i$ by the Monotonicity theorem we get
\begin{align*}
    \mu_{i-1} &< \mu_i - (r_i-j)\varepsilon_i \\
    &= \lambda_{r_1 + \cdots + r_{i-1} + j}(\phi(B)) \\
    &\le \lambda_{r_1 + \cdots + r_{i-1} + j}(\phi(A)) \\
    &\le \lambda_{r_1 + \cdots + r_{i-1} + j}(\phi(C)) \\
    &= \mu_i + (j-1)\eta_i \\
    &< \mu_{i+1}
\end{align*}
and hence it has to be $\lambda_{r_1 + \cdots + r_{i-1} + j}(\phi(A)) = \mu_i.$
\end{proof}

\begin{claim}\label{preserves characteristic polynomial}
$\phi$ preserves characteristic polynomial on $H_n^{\ge 0}$.
\end{claim}
\begin{proof}
We know the claim is true for all strictly positive matrices so let $A \ge 0$ be an arbitrary singular matrix. Let $0 =\mu_1 < \mu_2 < \cdots < \mu_k$ be its eigenvalues with multiplicities $r_1, \ldots, r_k \ge 1$. Then $\phi(A)$ has the same eigenvalues $\mu_1, \ldots, \mu_k$; we claim that the respective multiplicities are also the same.

By diagonalization for each $1 \le i \le k$ we can choose mutually orthogonal rank-one projections $P_{i1}, \ldots, P_{ir_i}$ such that
$$A = \sum_{i=1}^k \mu_{i}\sum_{j=1}^{r_i} P_{ij} = \sum_{i=2}^k \mu_{i}\sum_{j=1}^{r_i} P_{ij}.$$
For any $\varepsilon \in \langle 0, \mu_2\rangle$ set
$$C:= \varepsilon \sum_{j=1}^{r_1} P_{1j} + \sum_{i=2}^k \mu_i\sum_{j=1}^{r_i} P_{ij} = \varepsilon \sum_{j=1}^{r_1} P_{1j} + A.$$
Then $C > 0$ and hence by the previous claim it follows that $k_{\phi(C)} = k_C$. Since $A \le C$, by the Monotonicity theorem for all $1 \le j \le r_1$ we have
$$\lambda_{j}(\phi(A)) \le \lambda_j(\phi(C)) = \varepsilon.$$
It follows that $\lambda_{1}(\phi(A)) = \cdots = \lambda_{r_1}(\phi(A)) = 0$. Therefore, the multiplicity of $0$ as an eigenvalue of $\phi(A)$ is $\ge r_1$.
Moreover, for all $2 \le i \le k$ and $1 \le j \le r_i$ we have
$$\lambda_{r_1 + \cdots + r_{i-1} + j}(\phi(A)) \le \lambda_{r_1 + \cdots + r_{i-1} + j}(\phi(C)) = \mu_i.$$

Now for $2 \le i \le k$ choose $\varepsilon_i > 0$ such that $\mu_i - (r_i-1)\varepsilon_i > \mu_{i-1}$ and then define
$$B := \sum_{i=2}^k \sum_{j=1}^{r_i} (\mu_i - (r_i-j)\varepsilon_i)P_{ij}.$$
Then the multiplicity of $0$ as an eigenvalue of $\phi(B)$ is at least the multiplicity of $0$ as an eigenvalue of $B$, while all the other eigenvalues of $\phi(B)$ are necessarily equal to the ones of $B$ by virtue of being distinct. We conclude $k_{\phi(B)} = k_B$. Since $B \le A$, the Monotonicity theorem implies that for all for all $2 \le i \le k$ and $1 \le j \le r_i$ we have
$$\mu_{i-1}<\mu_i - (r_i-1)\varepsilon_i = \lambda_{r_1 + \cdots + r_{i-1} + j}(\phi(B)) \le \lambda_{r_1 + \cdots + r_{i-1} + j}(\phi(A)) \le \mu_i$$
and therefore it follows $\lambda_{r_1 + \cdots + r_{i-1} + j}(\phi(A)) = \mu_i$. This completes the proof of the claim.
\end{proof}

\begin{claim}\label{trace, rank and orthogonal projections}
$\phi$ preserves trace, rank and orthogonal projections.
\end{claim}
\begin{proof}
This follows directly from Claim \ref{preserves characteristic polynomial}.
\end{proof}

\begin{claim}\label{right limit}
For every $A \in H_n^{\ge 0}$ and $\varepsilon > 0$ holds $\norm{\phi\left(A + \varepsilon I\right) - \phi(A)} \le n\varepsilon$.
\end{claim}
\begin{proof}
Since $\phi\left(A + \varepsilon I\right) - \phi(A)\ge 0$ we have
$$\norm{\phi\left(A + \varepsilon I\right) - \phi(A)} = \nu\left(\phi\left(A + \varepsilon I\right) - \phi(A)\right) \le \Tr\left(\phi\left(A + \varepsilon I\right) - \phi(A)\right) = \Tr \left(\varepsilon I\right) = n\varepsilon.$$
\end{proof}
\begin{claim}\label{left limit}
For every $A \in H_n^{\ge 0}$ and $\varepsilon \in \langle 0,\lambda_{\mathrm{min}}(A)\rangle$ holds $\norm{\phi(A) - \phi\left(A - \varepsilon I\right)} \le n\varepsilon$.
\end{claim}
\begin{proof}
Since $\phi(A) - \phi\left(A - \varepsilon I\right)\ge 0$, we can apply the same argument as in the proof of Claim \ref{right limit}.
\end{proof}

\begin{claim}\label{continuous on >0}
$\phi$ is continuous at every $A > 0$.
\end{claim}
\begin{proof}
Let $A > 0$ be arbitrary and choose any $\varepsilon \in \langle 0,\lambda_{\mathrm{min}}(A)\rangle$. Assume $C \ge 0$ satisfies $\norm{C-A} \le \frac\varepsilon{n}$. The latter is equivalent to
$$A -\frac{\varepsilon}n I \le C\le A + \frac{\varepsilon}n I$$
and then (by the order preserving property) we also have
$$\phi\left(A - \frac{\varepsilon}n I\right) \le \phi(C) \le \phi\left(A + \frac{\varepsilon}n I\right).$$
By Claims \ref{right limit} and \ref{left limit}, this implies
$$-\varepsilon I\le \phi\left(A - \frac{\varepsilon}n I\right)-\phi(A) \le \phi(C)-\phi(A) \le \phi\left(A + \frac{\varepsilon}n I\right)-\phi(A) \le \varepsilon I$$
and hence
$$\norm{\phi(C) - \phi(A)} \le \varepsilon.$$
Since the choice of such $C$ was arbitrary, we conclude that $\phi$ is continuous at $A$.
\end{proof}

\begin{claim}
$\phi$ is globally continuous, i.e. $\phi$ is continuous at every $A \ge 0$.
\end{claim}
\begin{proof}
Let $A \ge 0$ be arbitrary and let $\varepsilon > 0$. For $X \in H_n^{\ge 0}$ denote $X_\varepsilon := X + \frac\varepsilon{3n}I$. Since $A_\varepsilon > 0$, we know that $\phi$ is continuous at $A_\varepsilon$. Hence
$$(\exists \delta > 0)(\forall B \ge 0)\qquad \norm{B-A_\varepsilon} \le \delta \implies \norm{\phi(B) - \phi(A_\varepsilon)} \le \frac\varepsilon3.$$
Then for all $C \ge 0$ such that $\norm{C-A} \le \delta$ we have
$$\norm{C_\varepsilon - A_\varepsilon} = \norm{C-A} \le \delta$$
and therefore by Claim \ref{right limit}
$$\norm{\phi(C) - \phi(A)} \le \norm{\phi(C) - \phi(C_\varepsilon)} + \norm{\phi(C_\varepsilon) - \phi(A_\varepsilon)} + \norm{\phi(A_\varepsilon) - \phi(A)} \le \frac\varepsilon3 + \frac\varepsilon3 + \frac\varepsilon3 = \varepsilon.$$
We conclude that $\phi$ is continuous at $A$.
\end{proof}
\end{proof}

We now proceed with the proof of Theorem \ref{main result}.

\setcounter{section}{1}
\setcounter{proposition}{3}
\setcounter{claim}{0}

\begin{proof}[Proof of Theorem 1.3.]

The proof will follows from the next series of claims.

\begin{claim}\label{preserves projections}
$\phi$ preserves rank (and hence the characteristic polynomial) on orthogonal projections.
\end{claim}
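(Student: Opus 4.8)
The plan is to exploit the interplay between the spectrum-shrinking hypothesis and the order structure on $\mathcal{P}$, without yet appealing to any continuity or spectrum-preserving property (indeed, at this stage we only know that $\phi$ shrinks spectrum, so Proposition \ref{continuity proof} does not yet apply). First I would observe that spectrum shrinking already forces $\phi$ to map projections to projections: for $P \in \mathcal{P}$, the matrix $\phi(P) \ge 0$ is hermitian with $\sigma(\phi(P)) \subseteq \sigma(P) \subseteq \{0,1\}$, and a positive matrix whose spectrum is contained in $\{0,1\}$ is an orthogonal projection. In particular, taking $P = 0$ and $P = I$ gives $\phi(0) = 0$ and $\phi(I) = I$.

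Next I would record that $\phi$ is strictly order preserving on $\mathcal{P}$: if $P, Q \in \mathcal{P}$ with $P < Q$ (that is, $P \le Q$ and $P \ne Q$), then $\phi(P) \le \phi(Q)$ by hypothesis and $\phi(P) \ne \phi(Q)$ by injectivity, hence $\phi(P) < \phi(Q)$. For orthogonal projections the relation $P < Q$ is equivalent to the strict inclusion $R(P) \subsetneq R(Q)$, so $r(P) < r(Q)$; thus $\phi$ is strictly rank increasing along chains in $\mathcal{P}$.

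The key step is then a counting argument. Fix $P \in \mathcal{P}_k$. Diagonalizing $P$ and $I - P$, choose a maximal chain of projections $0 = P_0 < P_1 < \cdots < P_{k-1} < P_k = P < P_{k+1} < \cdots < P_n = I$ with $r(P_i) = i$ for every $i$. Applying $\phi$ yields a chain $0 = \phi(P_0) < \phi(P_1) < \cdots < \phi(P_n) = I$ of $n+1$ orthogonal projections in $M_n$ whose ranks form a strictly increasing sequence of integers running from $0$ to $n$ in exactly $n$ steps; this forces $r(\phi(P_i)) = i$ for all $i$, and in particular $r(\phi(P)) = k = r(P)$. Since the characteristic polynomial of a rank-$k$ projection in $M_n$ is $t^{n-k}(t-1)^k$, which depends only on $k$, it follows that $\phi$ preserves the characteristic polynomial on $\mathcal{P}$.

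I do not expect a genuine obstacle here: the argument is short, and the only point requiring care is that the refined chain through $P$ really has $n+1$ elements and really is strict, so that the pigeonhole step pinning the ranks to $0, 1, \ldots, n$ is valid — but this is immediate from the strict monotonicity of rank along strict chains of projections noted above.
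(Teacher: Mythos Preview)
Your proposal is correct and follows essentially the same chain-and-pigeonhole argument as the paper's proof. If anything, your version is slightly more explicit: you spell out why $\phi(\mathcal{P})\subseteq\mathcal{P}$ via spectrum shrinking, and by including both endpoints $P_0=0$ and $P_n=I$ in the chain you make the rank-counting step airtight, whereas the paper's chain $P_1<\cdots<P_n$ tacitly uses $\phi(P_1)\ne 0$ (equivalently $\phi(0)=0$ plus injectivity) to pin the ranks to $1,\ldots,n$.
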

\begin{proof}
Every orthogonal projection is a part of a chain $P_1 < P_2 < \cdots < P_n$ where $r(P_j) = j$ for all $1 \le j \le n$. For such a chain, by the order preserving property we have
$$\phi(P_1) \le \phi(P_2) \le \cdots \le \phi(P_n).$$
By injectivity of $\phi$ all the inequalities have to be strict. In particular, all ranks have to be distinct and therefore $r(\phi(P_j)) = j$ for all $1 \le j \le n$.
\end{proof}

\begin{claim}\label{homogeno na projektorima}
For every orthogonal projection $P \in \mathcal{P}$ and scalar $\alpha \ge 0$ we have $\phi(\alpha P) = \alpha\phi(P)$.
\end{claim}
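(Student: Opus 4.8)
My plan is to first dispose of the degenerate cases and then reduce the statement to two elementary comparisons. If $\alpha = 0$, then $\sigma(\phi(0)) \subseteq \sigma(0) = \{0\}$ together with positivity forces $\phi(0) = 0$; likewise $\sigma(\phi(I)) \subseteq \{1\}$ forces $\phi(I) = I$, and if $P = I$ then $\sigma(\phi(\alpha I)) \subseteq \{\alpha\}$ forces $\phi(\alpha I) = \alpha I = \alpha\phi(I)$. So from now on I assume $\alpha > 0$ and $r(P) = k$ with $1 \le k \le n-1$.

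The key structural observation is that every $\phi(\alpha P)$ is a scalar multiple of a projection: since $\sigma(\alpha P) = \{0,\alpha\}$, spectrum shrinking gives $\sigma(\phi(\alpha P)) \subseteq \{0,\alpha\}$, and as $\phi(\alpha P)$ is positive it must equal $\alpha Q$ for a unique $Q \in \mathcal{P}$; moreover $\phi(P)$ is itself a projection (its spectrum lies in $\{0,1\}$), of rank $k$ by Claim \ref{preserves projections}. I would then pin down $r(Q)$ by a flag argument: extend $P$ to a maximal chain of projections $0 = P_0 < P_1 < \cdots < P_n = I$ with $r(P_j) = j$ and $P_k = P$. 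Multiplying by $\alpha$ and applying $\phi$, order preservation gives $0 = \phi(\alpha P_0) \le \phi(\alpha P_1) \le \cdots \le \phi(\alpha P_n) = \alpha I$, and as above each $\phi(\alpha P_j) = \alpha Q_j$ with $Q_j \in \mathcal{P}$. Injectivity makes this chain strict, $0 = Q_0 < Q_1 < \cdots < Q_n = I$; a strictly increasing chain of $n+1$ projections between $0$ and $I$ must realize all ranks $0,1,\dots,n$, so $r(Q_j) = j$, and in particular $r(Q) = k = r(\phi(P))$.

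It then remains to identify $Q$ with $P' := \phi(P)$, and here I would split on the size of $\alpha$, using that $Q$ and $P'$ are projections of the same rank. If $0 < \alpha \le 1$ then $\alpha P \le P$, hence $\alpha Q = \phi(\alpha P) \le \phi(P) = P'$; testing this inequality against any $x \in N(P')$ gives $0 \le \norm{Qx}^2 = \inner{Qx}{x} \le \frac{1}{\alpha}\inner{P'x}{x} = 0$, so $Qx = 0$, whence $N(P') \subseteq N(Q)$, i.e.\ $R(Q) \subseteq R(P')$; equality of ranks then yields $Q = P'$. If $\alpha \ge 1$ the mirror image works: $P \le \alpha P$ gives $P' \le \alpha Q$, testing against $x \in N(Q)$ gives $\norm{P'x}^2 = 0$, hence $R(P') \subseteq R(Q)$, and again $Q = P'$ by equality of ranks. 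In both cases $\phi(\alpha P) = \alpha Q = \alpha\phi(P)$.

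I expect the only genuinely delicate point to be the rank bookkeeping in the second step — ensuring $\phi(\alpha P)$ has rank exactly $k$, since a priori it is only clear (by injectivity) that it is nonzero. The flag argument resolves this cleanly, because injectivity turns a monotone chain of projections between $0$ and $I$ into a strictly monotone one, which is forced to be the complete flag of ranks; after that, each identification $Q = P'$ is a one-line consequence of order preservation together with the elementary fact that a positive operator annihilates any vector on which its quadratic form vanishes. Note that this route uses neither the Monotonicity theorem nor Lemma \ref{projection equality}.
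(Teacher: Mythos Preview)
Your proof is correct and follows essentially the same line as the paper's: first show $\phi(\alpha P) = \alpha Q$ for a projection $Q$ of the correct rank, then identify $Q$ with $\phi(P)$ by splitting on $\alpha \lessgtr 1$ and comparing ranges. The only cosmetic difference is that the paper obtains $r(Q) = r(P)$ in one stroke by observing that $\frac{1}{\alpha}\phi(\alpha\,\cdot\,)$ again satisfies all the hypotheses of the theorem, so Claim~\ref{preserves projections} applies to it directly --- your inline flag argument on the chain $\alpha P_0 < \cdots < \alpha P_n$ is precisely the unpacking of this observation.
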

\begin{proof}
We can assume $\alpha > 0$. The map $\frac1\alpha \phi(\alpha \cdot) : H_n^{\ge 0} \to H_n^{\ge 0}$ satisfies all assumptions of Theorem \ref{main result} so by Claim \ref{preserves projections} there exists an orthogonal projection $Q \in \mathcal{P}$ of rank $r(P)$ such that $\phi(\alpha P) =\alpha Q$.

Assume $\alpha > 1$ (the case $\alpha <1$ is analogous). From $P \le \alpha P$ it follows $\phi(P) \le \phi(\alpha P) = \alpha Q$ so in particular $R(\phi(P)) \le R(\alpha Q) = R(Q)$ which implies $Q = \phi(P)$.
\end{proof}


Let $\alpha \ge 0$. Consider the map
$$\phi_\alpha : H_n^{\ge 0} \to H_n^{\ge 0}, \quad \phi_\alpha(A) := \phi(A+\alpha I)-\alpha I.$$
Then $\phi_\alpha$ is also an injective map which preserves order and shrinks spectrum.

\begin{claim}\label{plus scalar times identity}
Let $P \in \mathcal{P}$ be an orthogonal projection and $\alpha \ge 0$. Then $\phi(P+\alpha I) = \phi(P)+\alpha I$.
\end{claim}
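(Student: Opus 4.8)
The plan is to show that $\phi(P + \alpha I)$ is both $\geq \phi(P) + \alpha I$ and $\leq \phi(P) + \alpha I$ (in the positive semidefinite order), forcing equality. The easy direction is the lower bound: since $P \leq P + \alpha I$, the order preserving property gives $\phi(P) \leq \phi(P + \alpha I)$, and we would like to leverage that $\phi$ shrinks spectrum to control the ``extra'' part. More precisely, I would first establish that $\phi(P + \alpha I) - \phi(P) \geq 0$ is not enough on its own, so instead I would compare $P + \alpha I$ with scalar multiples of rank-$n$ (i.e.\ full) projections and with $\alpha I$ itself. Note $\alpha I \leq P + \alpha I \leq (\alpha+1) I$; applying $\phi$ and using Claim~\ref{homogeno na projektorima} (which gives $\phi(\alpha I) = \alpha I$ since $I$ is a projection, and likewise $\phi((\alpha+1)I) = (\alpha+1)I$) yields $\alpha I \leq \phi(P + \alpha I) \leq (\alpha+1) I$. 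Thus $\phi(P+\alpha I) - \alpha I$ is a positive contraction; equivalently $\phi_\alpha(P)$ is a positive contraction.

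Next I would use the spectrum-shrinking hypothesis on $\phi_\alpha$ together with rank information. Since $\phi$ shrinks spectrum and $\sigma(P + \alpha I) = \{\alpha, \alpha+1\}$ (with $\alpha$ absent if $P = I$), we get $\sigma(\phi(P+\alpha I)) \subseteq \{\alpha, \alpha+1\}$, so $\phi(P+\alpha I) = Q + \alpha I$ for some orthogonal projection $Q$; equivalently $\phi_\alpha(P) = Q \in \mathcal{P}$. It remains to identify $Q$ with $\phi(P)$. First, $r(Q) = r(P)$: indeed, consider a maximal chain of projections through $P$ as in Claim~\ref{preserves projections} and add $\alpha I$ to each term; since $\phi_\alpha$ preserves order and is injective, the images $\phi_\alpha(P_1) < \cdots < \phi_\alpha(P_n)$ form a strictly increasing chain of projections, hence $r(\phi_\alpha(P_j)) = j$, giving $r(Q) = r(P)$. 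Second, from $P \leq P + \alpha I$ we have $\phi(P) \leq \phi(P+\alpha I) = Q + \alpha I$. Now I would invoke Lemma~\ref{projection equality}: we know $\phi(P)$ is an orthogonal projection with $r(\phi(P)) = r(P) = r(Q)$ (by Claim~\ref{preserves projections}), and $\phi(P) \leq Q + \alpha I \leq (\alpha+1)\phi(P) $? — that last inequality is exactly what needs care.

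The main obstacle is getting the hypothesis of Lemma~\ref{projection equality} into the right form, namely producing the inequality $(\alpha+1)\phi(P) \leq Q + \alpha I$. The inequality we have for free runs the other way. To fix this, I would instead run Lemma~\ref{projection equality} with the roles reversed, or apply it to $\phi_\alpha$ more cleverly: observe that $\phi(P) \leq Q + \alpha I$ means $R(\phi(P)) $ is spanned by vectors $x$ with $\langle (Q + \alpha I) x, x\rangle \geq \langle \phi(P) x, x \rangle$, which for a unit vector $x \in R(\phi(P))$ gives $\langle Q x, x \rangle \geq 1 - \alpha \cdot 0 = \ldots$ — this does not immediately pin down $x \in R(Q)$ when $\alpha > 0$. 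The cleaner route, which I expect to be the intended one, is to also use the reverse comparison: apply the already-established fact that $\phi_\alpha$ preserves projections and ranks to get $Q$ from one chain, and separately note that since $\alpha Q \le$ (something) we can run the rank-one argument of Claim~\ref{homogeno na projektorima}; alternatively, use that $P + \alpha I \le P + \alpha P + \ldots$ Concretely: pick $\beta$ large; then $P + \alpha I \le \beta P + \alpha I$ fails in general, so instead compare within the projection lattice after subtracting $\alpha I$ — i.e.\ work directly with $\phi_\alpha$, note $\phi_\alpha(P) = Q$ and $\phi_\alpha(0) = 0$, and use that $\phi_\alpha$ applied to the chain $0 \le P \le I$ together with $\phi_\alpha(I) = \phi(I + \alpha I) - \alpha I = (\alpha+1)I - \alpha I = I$ forces, via the interlacing/Monotonicity theorem and injectivity, that $Q = \phi(P)$. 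I would carry out this last identification by showing $R(\phi(P)) \subseteq R(Q)$ using $\phi(P) \le Q + \alpha I \le Q + \alpha \cdot$ (rank reasons), then invoking equality of ranks; any residual gap I would close by replacing $\alpha$ with a small $\alpha' \in (0,\alpha)$, using continuity (Proposition~\ref{continuity proof} applies to the spectrum-preserving restriction) to pass to the limit, though I suspect the direct lattice argument avoids needing continuity here.
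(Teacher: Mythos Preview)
Your proposal correctly sets up the problem: you deduce that $\phi(P+\alpha I) = Q + \alpha I$ for some projection $Q$ with $r(Q) = r(P)$, and you correctly identify the crux as showing $Q = \phi(P)$. However, you never actually close this gap. The inequality you obtain from $P \le P + \alpha I$, namely $\phi(P) \le Q + \alpha I$, is too weak when $\alpha \ge 1$ (indeed, then $\phi(P) \le I \le Q + \alpha I$ holds for \emph{any} projection $Q$, so it carries no information about $R(Q)$). Your subsequent suggestions --- reversing the roles in Lemma~\ref{projection equality}, using the chain $0 \le P \le I$ under $\phi_\alpha$, or a limiting argument via continuity --- do not work: the first two do not produce the needed range containment, and the third is circular, since Proposition~\ref{continuity proof} requires spectrum \emph{preservation}, which at this stage of the proof has not yet been established (it is Claim~1.3.7, proved downstream of the present claim).

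The missing idea is a single elementary inequality that you nearly stumble on but discard: since $\alpha P \le \alpha I$, one has
\[
(\alpha+1)P \;=\; P + \alpha P \;\le\; P + \alpha I.
\]
Applying $\phi$ and invoking Claim~\ref{homogeno na projektorima} (homogeneity on projections) gives
\[
(\alpha+1)\phi(P) \;=\; \phi\bigl((\alpha+1)P\bigr) \;\le\; \phi(P + \alpha I) \;=\; Q + \alpha I,
\]
which is exactly the hypothesis of Lemma~\ref{projection equality} with $P$ replaced by $\phi(P)$. Since $r(\phi(P)) = r(P) = r(Q)$, the lemma yields $\phi(P) = Q$ immediately. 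This is the paper's argument, and it avoids any appeal to continuity or to finer structural facts about $\phi_\alpha$ beyond what you already established.
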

\begin{proof}
The matrix 
$$\phi_\alpha (P) = \phi(P+\alpha I)-\alpha I$$
is an orthogonal projection of rank $r(P)$. Therefore, there is an orthogonal projection $Q(\alpha) \in \mathcal{P}$ of rank $r(P)$ such that
$$\phi(P+\alpha I) = Q(\alpha)+\alpha I.$$
In particular, we have
$$(\alpha+1)P = P+\alpha P \le P+\alpha I \implies (\alpha+1)\phi(P) = \phi((\alpha+1)P) \le \phi(P+\alpha I) = Q(\alpha)+\alpha I.$$
Lemma \ref{projection equality} now implies $Q(\alpha) = \phi(P)$. In particular, we have $\phi_\alpha(P) = \phi(P)$ for all $\alpha \ge 0$.
\end{proof}

\begin{claim}\label{linearity on projections}
Let $P \in \mathcal{P}$ be an orthogonal projection and $\alpha,\beta \ge 0$. Then $\phi(\alpha P+\beta I) = \alpha \phi(P)+\beta I$.
\end{claim}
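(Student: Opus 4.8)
The plan is to combine the two facts already established, namely $\phi(\alpha P)=\alpha\phi(P)$ (Claim~\ref{homogeno na projektorima}) and $\phi(P+\beta I)=\phi(P)+\beta I$ (Claim~\ref{plus scalar times identity}), but the subtlety is that the scalar multiple $\alpha$ also needs to be carried through the translation by $\beta I$. First I would dispose of the trivial cases $\alpha=0$ (which is exactly Claim~\ref{plus scalar times identity} applied to the zero projection, or rather follows from $\phi(\beta I)=\beta I$, itself a consequence of Claim~\ref{homogeno na projektorima} with $P=I$) and $\beta=0$ (Claim~\ref{homogeno na projektorima}). So assume $\alpha,\beta>0$.

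The key observation is that the auxiliary map $\psi:=\tfrac1\alpha\phi_\beta(\alpha\,\cdot)$, that is $\psi(A)=\tfrac1\alpha\bigl(\phi(\alpha A+\beta I)-\beta I\bigr)$, is again an injective, order-preserving, spectrum-shrinking self-map of $H_n^{\ge 0}$ — the translation $\phi_\beta$ was already noted to preserve these properties, and pre/post-composing with the scaling $A\mapsto\alpha A$ and $A\mapsto\tfrac1\alpha A$ clearly does too. Hence Claim~\ref{preserves projections} applies to $\psi$: there is an orthogonal projection $Q\in\mathcal P$ with $r(Q)=r(P)$ such that $\psi(P)=Q$, i.e.
$$\phi(\alpha P+\beta I)=\alpha Q+\beta I.$$
It remains to identify $Q=\phi(P)$. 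For this I would argue exactly as in the proof of Claim~\ref{plus scalar times identity}: from $(\alpha+1)P\le \alpha P+\beta I$ — which holds since $(\alpha+1)P=\alpha P+P\le\alpha P+I\le\alpha P+\beta I$ when $\beta\ge 1$; for $0<\beta<1$ one instead uses a suitable scalar, noting $P\le\beta I+\alpha P$ is false but $(\tfrac\beta{\text{something}})$-type inequality works, so more carefully: pick the scalar $\gamma:=\min\{1,\beta\}>0$ and observe $(\gamma+?)\,\dots$ — cleaner is to apply order preservation to $\gamma P+\alpha P=(\alpha+\gamma)P\le\alpha P+\gamma I\le\alpha P+\beta I$ together with $\phi((\alpha+\gamma)P)=(\alpha+\gamma)\phi(P)$ from Claim~\ref{homogeno na projektorima}, giving $(\alpha+\gamma)\phi(P)\le\alpha Q+\beta I$. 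Then $R(\phi(P))\subseteq R(\alpha Q+\beta I)$ need not be immediate, so instead I invoke Lemma~\ref{projection equality} in the rescaled form: dividing the inequality $(\alpha+\gamma)\phi(P)\le\alpha Q+\beta I$ appropriately shows $\tfrac{\alpha+\gamma}{\gamma}\phi(P)\le\tfrac\alpha\gamma Q + \tfrac\beta\gamma I$, and since $\beta\le$ the relevant bound this fits the hypothesis $(\mu+1)\phi(P)\le Q+\mu I$ of Lemma~\ref{projection equality} with $\mu=\alpha/\gamma$ provided $\beta/\gamma\le \alpha/\gamma$, i.e. $\beta\le\alpha$... which may fail.

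Because of this asymmetry, the honest route is to split on whether $\beta\ge\alpha$ or $\beta<\alpha$ is not what matters; what matters is reproducing the Lemma~\ref{projection equality} setup, so I would instead directly mimic Claim~\ref{plus scalar times identity}'s computation verbatim with $\alpha I$ replaced by $\beta I$ and $P$ replaced by $\alpha P$: we have $(\tfrac\beta\alpha+1)(\alpha P)=\beta P+\alpha P\le\beta I+\alpha P$ — wait, $\beta P\le\beta I$ and $\alpha P\le\alpha P$, so $(\beta+\alpha)P\le\beta I+\alpha P$ is wrong since the right side is not of the form scalar times $I$. The correct inequality is simply $\alpha P+\beta P\le\alpha I+\beta P$? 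No. The clean statement: since $P\le I$, $\alpha P+\beta I\ge\alpha P+\beta P=(\alpha+\beta)P$, so $(\alpha+\beta)P\le\alpha P+\beta I$, hence applying $\phi$ and Claim~\ref{homogeno na projektorima}: $(\alpha+\beta)\phi(P)=\phi((\alpha+\beta)P)\le\phi(\alpha P+\beta I)=\alpha Q+\beta I$. Dividing by $\alpha$: $(1+\tfrac\beta\alpha)\phi(P)\le Q+\tfrac\beta\alpha I$, which is precisely the hypothesis of Lemma~\ref{projection equality} with the scalar $\tfrac\beta\alpha\ge 0$. Therefore $Q=\phi(P)$, completing the proof.

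\medskip

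\emph{Main obstacle.} The only real subtlety is bookkeeping: ensuring the scaled inequality lands in exactly the form $(\lambda+1)P\le Q+\lambda I$ required by Lemma~\ref{projection equality}, which as the sketch above shows works out cleanly using only $P\le I$ and homogeneity on projections — no case split is actually needed once one uses $(\alpha+\beta)P\le\alpha P+\beta I$ rather than trying to force a $\beta\ge 1$ reduction. The verification that the auxiliary map $\psi$ inherits injectivity, order preservation, and spectrum shrinking is routine.
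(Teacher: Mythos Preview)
Your final argument is correct: the auxiliary map $\psi=\tfrac1\alpha\phi_\beta(\alpha\,\cdot)$ satisfies the hypotheses, so Claim~\ref{preserves projections} gives $\phi(\alpha P+\beta I)=\alpha Q+\beta I$ with $r(Q)=r(P)$; then $(\alpha+\beta)P\le\alpha P+\beta I$ yields $(1+\tfrac\beta\alpha)\phi(P)\le Q+\tfrac\beta\alpha I$, and Lemma~\ref{projection equality} forces $Q=\phi(P)$. (The preceding false starts are unnecessary---the inequality $(\alpha+\beta)P\le\alpha P+\beta I$ and the division by $\alpha$ are all you need, as you eventually note.)

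The paper's route is shorter and avoids re-running the Lemma~\ref{projection equality} argument. It simply applies Claim~\ref{homogeno na projektorima} not to $\phi$ but to $\phi_\beta$ (which is legitimate since $\phi_\beta$ satisfies the same hypotheses), and then invokes the identity $\phi_\beta(P)=\phi(P)$ already recorded at the end of the proof of Claim~\ref{plus scalar times identity}:
\[
\phi(\alpha P+\beta I)-\beta I=\phi_\beta(\alpha P)=\alpha\,\phi_\beta(P)=\alpha\,\phi(P).
\]
So the difference is that the paper uses Claim~\ref{plus scalar times identity} as a black box (specifically its corollary $\phi_\beta(P)=\phi(P)$), whereas you essentially re-derive that identification inside the present proof via Lemma~\ref{projection equality}. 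Your approach is self-contained but duplicates work; the paper's exploits the modular structure of the claims and is a one-liner.
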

\begin{proof}
According to Claims \ref{plus scalar times identity} and \ref{homogeno na projektorima} we have
$$\alpha \phi(P) = \alpha \phi_\beta (P) = \phi_\beta (\alpha P) = \phi(\alpha P+\beta I) - \beta I$$
which implies
$$\phi(\alpha P+\beta I) = \alpha \phi(P)+\beta I.$$
\end{proof}

\begin{claim}
$\phi$ preserves characteristic polynomial on matrices with exactly two distinct eigenvalues.
\end{claim}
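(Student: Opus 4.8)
The plan is to reduce to the normal form already handled in Claim~\ref{linearity on projections}: a positive matrix with exactly two distinct eigenvalues is, up to a choice of spectral projection, of the shape $\alpha P + \beta I$, and Claim~\ref{linearity on projections} tells us exactly what $\phi$ does to such a matrix, so essentially no new work is needed.

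Concretely, I would fix $A \in H_n^{\ge 0}$ with exactly two distinct eigenvalues $0 \le \mu_1 < \mu_2$ of multiplicities $r_1, r_2$ (so $r_1 + r_2 = n$), and let $P$ denote the spectral projection of $A$ onto the $\mu_2$-eigenspace, so that $P \in \mathcal{P}_{r_2}$ and
$$A = \mu_1(I-P) + \mu_2 P = (\mu_2-\mu_1)P + \mu_1 I,$$
i.e. $A = \alpha P + \beta I$ with $\alpha := \mu_2 - \mu_1 > 0$ and $\beta := \mu_1 \ge 0$. Then I would apply Claim~\ref{linearity on projections} to obtain $\phi(A) = \alpha\,\phi(P) + \beta I$, and invoke Claim~\ref{preserves projections} to see that $\phi(P)$ is again an orthogonal projection, of rank $r(\phi(P)) = r(P) = r_2$. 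Consequently $\phi(A)$ has spectral decomposition $\beta(I-\phi(P)) + (\alpha+\beta)\phi(P)$, so its eigenvalues are $\beta = \mu_1$ with multiplicity $n - r_2 = r_1$ and $\alpha + \beta = \mu_2$ with multiplicity $r_2$. Hence the characteristic polynomial of $\phi(A)$ equals $(\lambda - \mu_1)^{r_1}(\lambda - \mu_2)^{r_2}$, which is that of $A$.

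I do not expect any genuine obstacle here: the statement is really a corollary of Claim~\ref{linearity on projections}. The only point that deserves a moment's care is the elementary observation that every positive matrix with exactly two distinct eigenvalues is of the form $\alpha P + \beta I$ with $\alpha > 0$ and $\beta \ge 0$, which is precisely what allows the degenerate case $\mu_1 = 0$ to be absorbed without a separate argument.
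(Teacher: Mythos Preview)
Your proof is correct and essentially identical to the paper's: both write $A=(\lambda_{\mathrm{max}}-\lambda_{\mathrm{min}})P+\lambda_{\mathrm{min}}I$ for the $\lambda_{\mathrm{max}}$-eigenprojection $P$, apply Claim~\ref{linearity on projections} to get $\phi(A)=(\lambda_{\mathrm{max}}-\lambda_{\mathrm{min}})\phi(P)+\lambda_{\mathrm{min}}I$, and then use Claim~\ref{preserves projections} to conclude that $\phi(P)$ has the same rank as $P$. Your version is slightly more explicit about the multiplicities, but there is no substantive difference.
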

\begin{proof}
Let $A \ge 0$ be a matrix with exactly two distinct eigenvalues. Let $P$ be the $\lambda_{\mathrm{max}}(A)$-eigenprojection. Then we have
$$A = \lambda_{\mathrm{min}}(A)(I-P) + \lambda_{\mathrm{max}}(A)P = \lambda_{\mathrm{min}}(A) I + (\lambda_{\mathrm{max}}(A)-\lambda_{\mathrm{min}}(A))P$$
and hence by Claim \ref{linearity on projections} we obtain
\begin{align*}
    \phi(A) &= \phi(\lambda_{\mathrm{min}}(A) I + (\lambda_{\mathrm{max}}(A)-\lambda_{\mathrm{min}}(A)) P) \\
    &= \lambda_{\min}(A) I + (\lambda_{\mathrm{max}}(A)-\lambda_{\mathrm{min}}(A)) \phi(P) \\
    &= \lambda_{\mathrm{min}}(A)(I-\phi(P)) + \lambda_{\mathrm{max}}(A)\phi(P).
\end{align*}
As $\phi(P)$ is an orthogonal projection of rank $r(P)$, the claim follows.
\end{proof}

\begin{claim}
For every $\alpha \ge 0$ it holds $\phi(\alpha I) = \alpha I$.
\end{claim}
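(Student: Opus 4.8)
The plan is to obtain this as an immediate formal consequence of the linearity facts already proved for projections. First I would note that the identity matrix $I$ is an orthogonal projection of rank $n$, so by Claim \ref{preserves projections} the matrix $\phi(I)$ is an orthogonal projection of rank $n$; but the only orthogonal projection in $M_n$ of full rank is $I$ itself, so $\phi(I) = I$. Applying Claim \ref{homogeno na projektorima} with $P = I$ then gives, for every $\alpha \ge 0$,
$$\phi(\alpha I) = \alpha\,\phi(I) = \alpha I,$$
which is exactly the assertion.

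An even shorter route is to invoke Claim \ref{linearity on projections} directly: taking there an arbitrary projection $P$ together with the scalars $0$ and $\alpha$ (in the roles of $\alpha$ and $\beta$) yields $\phi(\alpha I) = \phi(0\cdot P + \alpha I) = 0\cdot\phi(P) + \alpha I = \alpha I$. There is no genuine obstacle in this step; the content is entirely contained in the earlier claims, and the purpose of isolating it is to record the normalization $\phi(\alpha I) = \alpha I$ that will be used later when the conjugating unitary or antiunitary map is reconstructed.
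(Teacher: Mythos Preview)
Your argument is correct: both routes you give (via Claim~\ref{preserves projections} plus Claim~\ref{homogeno na projektorima}, or via Claim~\ref{linearity on projections} with the first scalar equal to $0$) are valid deductions from results already established in the paper.

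The paper, however, takes a different and more self-contained path. It simply observes that the spectrum-shrinking hypothesis gives $\sigma(\phi(\alpha I)) \subseteq \sigma(\alpha I) = \{\alpha\}$; since the spectrum is nonempty and $\phi(\alpha I)$ is hermitian, this forces $\phi(\alpha I) = \alpha I$ directly. This argument uses none of the earlier claims, only the basic hypothesis on $\phi$. Your approach, by contrast, leverages the machinery built up for projections (rank preservation, homogeneity, the linearity formula), which is logically heavier but perfectly legitimate since those claims are already in hand. The paper's version has the advantage of being independent of everything else and emphasizing that scalar multiples of the identity are automatically fixed by any spectrum-shrinking map on hermitian matrices; your version has the advantage of showing that the result is already implicit in the projection calculus developed so far.
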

\begin{proof}
We have $\sigma(\phi(\alpha I)) \subseteq \{\alpha\}$ so that $\sigma(\phi(\alpha I)) = \{\alpha\}$ and hence the claim follows.
\end{proof}

\begin{claim}
$\phi$ preserves characteristic polynomial of every $A \ge 0$.
\end{claim}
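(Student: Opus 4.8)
The plan is to bootstrap the general case from the already-established case of matrices with at most two distinct eigenvalues, using a two-sided sandwiching argument together with the Monotonicity theorem; no further induction is needed.

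First I would diagonalize: given $A \ge 0$, write its distinct eigenvalues as $0 \le \mu_1 < \mu_2 < \cdots < \mu_k$, with multiplicities $r_1, \ldots, r_k$ and corresponding eigenprojections $Q_1, \ldots, Q_k \in \mathcal{P}$, so that $A = \sum_{i=1}^k \mu_i Q_i$ with $\sum_{i=1}^k Q_i = I$ and $Q_i Q_j = 0$ for $i \ne j$. Since $\phi$ shrinks spectrum we already know $\sigma(\phi(A)) \subseteq \{\mu_1, \ldots, \mu_k\}$, so the whole content of the claim is that each $\mu_i$ occurs in $\sigma(\phi(A))$ with multiplicity exactly $r_i$. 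Ordering the eigenvalues of $\phi(A)$ increasingly as $\lambda_1(\phi(A)) \le \cdots \le \lambda_n(\phi(A))$, this amounts to showing $\lambda_p(\phi(A)) = \mu_i$ for every index $p$ in the $i$-th block $r_1 + \cdots + r_{i-1} < p \le r_1 + \cdots + r_i$.

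To control the $i$-th block I would introduce the comparison matrices
$$A_i^- := \mu_1 \sum_{j<i} Q_j + \mu_i \sum_{j \ge i} Q_j, \qquad A_i^+ := \mu_i \sum_{j \le i} Q_j + \mu_k \sum_{j>i} Q_j,$$
and verify the routine inequalities $A_i^- \le A \le A_i^+$, which are immediate from the termwise comparison of eigenvalues on the summands $R(Q_j)$. The key feature of this choice is that $A_i^-$ and $A_i^+$ each have at most two distinct eigenvalues — in the edge cases one has $A_1^- = \mu_1 I$ and $A_k^+ = \mu_k I$ — so the previously proven claims apply and $\phi$ preserves their characteristic polynomials. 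Reading off positions, $\phi(A_i^-)$ then has eigenvalue $\mu_i$ at every index $p > r_1 + \cdots + r_{i-1}$, and $\phi(A_i^+)$ has eigenvalue $\mu_i$ at every index $p \le r_1 + \cdots + r_i$.

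Finally, applying the order-preserving property to $A_i^- \le A \le A_i^+$ and then the Monotonicity theorem to $\phi(A_i^-) \le \phi(A) \le \phi(A_i^+)$ yields, for every $p$ in the $i$-th block,
$$\mu_i = \lambda_p(\phi(A_i^-)) \le \lambda_p(\phi(A)) \le \lambda_p(\phi(A_i^+)) = \mu_i,$$
hence $\lambda_p(\phi(A)) = \mu_i$. Letting $i$ range over $1, \ldots, k$ the blocks partition $\{1, \ldots, n\}$, so all $n$ eigenvalues of $\phi(A)$ are determined, and $\phi(A)$ has the same characteristic polynomial as $A$. I do not expect any genuine obstacle here: the only points requiring attention are the verification of $A_i^- \le A \le A_i^+$ and the index bookkeeping matching eigenvalue positions to the blocks, together with the observation that for $i = 1$ and $i = k$ one of the comparison matrices degenerates to a scalar multiple of $I$, a case covered by the claim $\phi(\alpha I) = \alpha I$.
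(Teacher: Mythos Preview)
Your proposal is correct and follows essentially the same approach as the paper: the paper's comparison matrices $B_i$ and $C_i$ are exactly your $A_i^-$ and $A_i^+$, and the argument proceeds identically via the Monotonicity theorem applied to $\phi(B_i) \le \phi(A) \le \phi(C_i)$. The only cosmetic difference is that the paper treats the extreme blocks $i=1$ and $i=k$ with one-sided bounds (using $\sigma(\phi(A)) \subseteq \{\mu_1,\ldots,\mu_k\}$ for the missing side), whereas you handle them uniformly by observing that $A_1^- = \mu_1 I$ and $A_k^+ = \mu_k I$ are covered by $\phi(\alpha I)=\alpha I$; both are fine.
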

\begin{proof}
Note that the claim is already proven if $k=1,2$ so we can assume $k \ge 3$.

Let $A \ge 0$ be arbitrary with eigendecomposition $$A = \sum_{i=1}^k \mu_iP_i, \qquad \mu_1 < \cdots < \mu_k.$$ For $2 \le i \le k$ set
$$B_i := \mu_1(P_1+\cdots + P_{i-1}) + \mu_i(P_i+\cdots + P_k)$$
so that $B_i \le A$. Similarly, for $1 \le i \le k-1$ set
$$C_i := \mu_i(P_1+\cdots + P_i) + \mu_k(P_{i+1}+\cdots + P_k)$$
so that $A \le C_i$.

For all $2 \le i \le k-1$ we have $\phi(B_i) \le \phi(A) \le \phi(C_i)$ and therefore for all $1 \le j \le r_i$ holds
$$\mu_i = \lambda_{r_1+\cdots+r_{i-1}+j}(\phi(B_i)) \le \lambda_{r_1+\cdots+r_{i-1}+j}(\phi(A)) \le \lambda_{r_1+\cdots+r_{i-1}+j}(\phi(C_i)) = \mu_i.$$
We have $\phi(A) \le \phi(C_1)$ and hence for all $1 \le j \le r_1$ we have
$$\lambda_{j}(\phi(A)) \le \lambda_{j}(\phi(C_1)) \le \mu_1$$
We have $\phi(B_k) \le \phi(A)$ and hence for all $1 \le j \le r_k$ we have
$$\mu_n = \lambda_{r_1+\cdots+r_{k-1}+j}(\phi(B_n)) \le \lambda_{r_1+\cdots+r_{k-1}+j}(\phi(A)).$$
The claim follows.
\end{proof}

\begin{claim}
$\phi$ is globally continuous.
\end{claim}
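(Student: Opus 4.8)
The plan is to mimic the continuity argument already carried out in Proposition \ref{continuity proof}, which only used that $\phi$ preserves order and the characteristic polynomial --- and at this point in the proof of Theorem \ref{main result} we have just established that $\phi$ preserves the characteristic polynomial of every $A \ge 0$. So the structural skeleton is identical: first control one-sided perturbations by $\varepsilon I$, then get continuity on $H_n^{>0}$, then bootstrap to all of $H_n^{\ge 0}$.

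Concretely, I would first record the two one-sided estimates. For any $A \ge 0$ and $\varepsilon > 0$, since $\phi(A+\varepsilon I) - \phi(A) \ge 0$ (order preservation) and $\phi$ preserves trace and spectral radius (both consequences of preserving the characteristic polynomial),
\[
\norm{\phi(A+\varepsilon I) - \phi(A)} = \nu\bigl(\phi(A+\varepsilon I) - \phi(A)\bigr) \le \Tr\bigl(\phi(A+\varepsilon I) - \phi(A)\bigr) = \Tr(\varepsilon I) = n\varepsilon,
\]
and symmetrically, for $\varepsilon \in \langle 0, \lambda_{\mathrm{min}}(A)\rangle$, from $\phi(A) - \phi(A-\varepsilon I) \ge 0$ we get $\norm{\phi(A) - \phi(A-\varepsilon I)} \le n\varepsilon$. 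Next, for $A > 0$ and $\varepsilon \in \langle 0, \lambda_{\mathrm{min}}(A)\rangle$, any $C \ge 0$ with $\norm{C - A} \le \varepsilon/n$ satisfies $A - \tfrac\varepsilon n I \le C \le A + \tfrac\varepsilon n I$, so $\phi(A - \tfrac\varepsilon n I) \le \phi(C) \le \phi(A + \tfrac\varepsilon n I)$, and subtracting $\phi(A)$ and applying the two one-sided bounds sandwiches $\phi(C) - \phi(A)$ between $-\varepsilon I$ and $\varepsilon I$, giving $\norm{\phi(C) - \phi(A)} \le \varepsilon$; hence $\phi$ is continuous at every $A > 0$. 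Finally, for arbitrary $A \ge 0$ and $\varepsilon > 0$, set $A_\varepsilon := A + \tfrac{\varepsilon}{3n} I > 0$, pick $\delta > 0$ witnessing continuity of $\phi$ at $A_\varepsilon$ (with tolerance $\varepsilon/3$), and for $C \ge 0$ with $\norm{C - A} \le \delta$ write $C_\varepsilon := C + \tfrac{\varepsilon}{3n} I$, so $\norm{C_\varepsilon - A_\varepsilon} = \norm{C - A} \le \delta$ and the triangle inequality together with the one-sided estimate for the $\tfrac{\varepsilon}{3n}I$ shifts gives
\[
\norm{\phi(C) - \phi(A)} \le \norm{\phi(C) - \phi(C_\varepsilon)} + \norm{\phi(C_\varepsilon) - \phi(A_\varepsilon)} + \norm{\phi(A_\varepsilon) - \phi(A)} \le \tfrac\varepsilon 3 + \tfrac\varepsilon 3 + \tfrac\varepsilon 3 = \varepsilon.
\]

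Since all three stages are verbatim repetitions of Claims \ref{right limit}--\ref{left limit}, Claim \ref{continuous on >0}, and the final claim in the proof of Proposition \ref{continuity proof}, I would simply state that the argument is identical, now legitimately invoking the fact (just proved) that $\phi$ preserves the characteristic polynomial of every positive matrix. There is essentially no obstacle here --- the only thing to be careful about is that the earlier continuity proof was formally stated for \emph{spectrum-preserving} maps, whereas here $\phi$ only shrinks spectrum a priori; but the point is that we have already upgraded $\phi$ to a character-polynomial preserver in the preceding claim, so preservation of $\Tr$ and $\nu$ --- the only spectral inputs the continuity argument uses --- is available. Thus the proof reduces to citing Proposition \ref{continuity proof}, or reproducing its three short steps.

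\begin{proof}
Having shown that $\phi$ preserves the characteristic polynomial of every $A \ge 0$, in particular $\phi$ preserves trace and spectral radius, the proof of Proposition \ref{continuity proof} applies verbatim: the one-sided estimates of Claims \ref{right limit} and \ref{left limit} hold (using $\phi(A+\varepsilon I)-\phi(A) \ge 0$ and $\Tr\,\phi(\varepsilon I \text{ shift}) = n\varepsilon$), hence $\phi$ is continuous at every $A > 0$ as in Claim \ref{continuous on >0}, and the bootstrapping argument via $A_\varepsilon = A + \tfrac{\varepsilon}{3n}I$ yields continuity at every $A \ge 0$.
\end{proof}
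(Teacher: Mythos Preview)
Your proposal is correct and matches the paper's approach exactly: the paper's proof is the single sentence ``This follows from Proposition \ref{continuity proof}.'' Your observation that the hypothesis of Proposition \ref{continuity proof} (spectrum preservation) is now satisfied because the preceding claim upgraded $\phi$ to a characteristic-polynomial preserver is precisely the point that makes the citation legitimate, and your detailed recap of Claims \ref{right limit}--\ref{left limit} and \ref{continuous on >0} is a faithful unpacking of that citation.
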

\begin{proof}
This follows from Proposition \ref{continuity proof}.
\end{proof}



As the set $H_n^{> 0}$ is open in $H_n^{\ge 0}$, by the invariance of domain theorem applied to the map $\phi|_{H_n^{> 0}} : H_n^{> 0} \to \phi(H_n^{> 0})$ , we get that its range $\phi(H_n^{> 0})$ is an open set containing the identity $I$. In particular, there exists $\varepsilon > 0$ such that $\overline{B}(I,\varepsilon) \subseteq \phi(H_n^{> 0})$ where $\overline{B}(I,\varepsilon)$ denotes the closed ball around $I$ of radius $\varepsilon$.

\begin{claim}
The restriction $\phi|_{\mathcal{P}_k} : \mathcal{P}_k \to \mathcal{P}_k$ is bijective for all $1 \le k \le n$.
\end{claim}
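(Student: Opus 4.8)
The plan is to use the only two facts about $\phi$ we have not yet fully exploited: its injectivity, and the fact (established in the paragraph immediately preceding the claim) that $\phi(H_n^{>0})$ is open and contains a closed ball $\overline{B}(I,\varepsilon)$ about the identity. Well-definedness of $\phi|_{\mathcal{P}_k}$ as a self-map of $\mathcal{P}_k$ is immediate from Claim \ref{preserves projections}, and injectivity of $\phi|_{\mathcal{P}_k}$ is inherited from $\phi$; so the whole content of the claim is surjectivity of $\phi|_{\mathcal{P}_k}$.

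Surjectivity for $k = n$ is trivial, since $\mathcal{P}_n = \{I\}$ and $\phi(I) = I$. So fix $1 \le k \le n-1$ and let $Q \in \mathcal{P}_k$ be arbitrary. Choose any $\delta$ with $0 < \delta \le \varepsilon$. Since $\|(I + \delta Q) - I\| = \delta \le \varepsilon$, we have $I + \delta Q \in \overline{B}(I,\varepsilon) \subseteq \phi(H_n^{>0})$, so there is $A \in H_n^{>0}$ with $\phi(A) = I + \delta Q$. Because $\phi$ preserves the characteristic polynomial on all of $H_n^{\ge 0}$, the characteristic polynomial of $A$ equals that of $I+\delta Q$, namely $(x-1)^{n-k}(x-1-\delta)^k$; as $A$ is Hermitian it is unitarily diagonalizable, so writing $P$ for its spectral projection onto the $(1+\delta)$-eigenspace we get $P \in \mathcal{P}_k$ and $A = (I - P) + (1+\delta)P = I + \delta P$.

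Now apply Claim \ref{linearity on projections} (with the projection $P$ and scalars $\alpha = \delta$, $\beta = 1$) to obtain $\phi(A) = \phi(\delta P + I) = \delta\phi(P) + I$. Comparing with $\phi(A) = I + \delta Q$ and cancelling $\delta > 0$ gives $\phi(P) = Q$, so $Q$ lies in the range of $\phi|_{\mathcal{P}_k}$, which proves surjectivity and hence the claim. I do not expect a real obstruction here — all the substantial work (preservation of the characteristic polynomial, linearity of $\phi$ along rays through projections, and openness of $\phi(H_n^{>0})$ near $I$) is already in place; the only point requiring care is keeping $1 \le k \le n-1$, so that $I + \delta Q$ genuinely has two distinct eigenvalues and $A$ is forced into the form $I + \delta P$ with $P$ of rank exactly $k$, while the degenerate case $k = n$ is handled separately.
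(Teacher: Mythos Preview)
Your proof is correct and follows essentially the same route as the paper: both use the inclusion $\overline{B}(I,\varepsilon)\subseteq\phi(H_n^{>0})$ to find a preimage $A$ of $I+\varepsilon Q$ (you write $\delta$), identify $A=I+\varepsilon P$ for a projection $P$ via the spectrum/characteristic polynomial, and then invoke Claim~\ref{linearity on projections} to conclude $\phi(P)=Q$. The only cosmetic differences are that you treat $k=n$ separately and deduce $r(P)=k$ from the characteristic polynomial, whereas the paper's argument works uniformly in $k$ and reads off $r(P)=k$ afterward from rank preservation of $\phi$.
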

\begin{proof}
By Claim \ref{trace, rank and orthogonal projections} we already know $\phi(\mathcal{P}_k) \subseteq \mathcal{P}_k$. Since $\phi$ is injective by assumption, it remains to prove surjectivity. Let $Q \in \mathcal{P}_k$ be arbitrary. Then
$$\norm{(\varepsilon Q + I) - I} = \varepsilon\norm{Q} = \varepsilon \implies \varepsilon Q + I \in \phi(H_n^{\ge 0})$$
so there exists $A \in H_n^{\ge 0}$ such that $\varepsilon Q + I = \phi(A)$. The spectrum of $A$ satisfies $\sigma(A) \subseteq  \{1+\varepsilon,1\}$ so there exists $P \in \mathcal{P}$ such that $A = \varepsilon P + I$. It follows
$$\varepsilon Q + I = \phi(A) = \phi(\varepsilon P + I) = \varepsilon \phi(P) + I$$
so $\phi(P) = Q$. Since $\phi$ preserves rank, it has to be $r(P) = r(Q) = k$ so $P \in \mathcal{P}_k$.
\end{proof}

The proof can be now completed using the arguments  given in \cite{semrl}. For completeness, we include details:

\begin{claim}\label{expands trace of product}
For all $P,R \in \mathcal{P}_1$ we have
$$\Tr(\phi(P)\phi(R)) \ge \Tr(PR).$$
\end{claim}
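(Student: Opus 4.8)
The plan is to use the quantity $\alpha(P,R)$ from Lemma \ref{tR <=} as a bridge between the trace of a product and an order-theoretic condition that $\phi$ is guaranteed to respect. Fix $P, R \in \mathcal{P}_1$. If $P = R$ then $\Tr(\phi(P)\phi(R)) = \Tr(\phi(P)) = 1 = \Tr(PR)$ since $\phi$ preserves rank on projections, so we may assume $P \neq R$. Choose the unique $Q \in \mathcal{P}_1$ with $P \perp Q$ and $R \le P + Q$ (it exists because $P \neq R$ forces $R - P$ to be a rank-one positive part spanning a line orthogonal to $R(P)$; concretely $Q$ is the projection onto the orthogonal complement of $R(P)$ inside $R(P) + R(R)$ when $R \not\le P$, and the degenerate sub-cases are handled directly). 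By Lemma \ref{tR <=} we have
$$\alpha(P,R) = \frac{2}{2 - \Tr(PR)},$$
and this is an increasing function of $\Tr(PR) \in [0,1)$, so it suffices to show $\alpha(\phi(P),\phi(R)) \ge \alpha(P,R)$, i.e. that the maximal $t$ with $t\,\phi(R) \le \phi(Q) + 2\phi(P)$ is at least the maximal $t$ with $t R \le Q + 2P$.

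The key step is to transport the inequality $tR \le Q + 2P$ through $\phi$. Write $t = \alpha(P,R)$, so $tR \le Q + 2P$. The right-hand side $Q + 2P$ is a matrix with eigenvalues among $\{0,1,2\}$, and $tR$ has eigenvalues among $\{0,t\}$. I would like to say $\phi(tR) \le \phi(Q + 2P)$, but $\phi$ only preserves order, not these specific combinations — however, $tR$ and $Q + 2P$ are each of the form (scalar)(projection) or a two-eigenvalue matrix, and by Claim \ref{homogeno na projektorima} we have $\phi(tR) = t\phi(R)$, while $Q + 2P = 2P + Q$ has exactly two distinct eigenvalues $1, 2$ (assuming $P \ne Q$, which holds) with $2$-eigenprojection $P$, so by Claim \ref{linearity on projections} applied to $2P + Q = Q + (2-1)\cdot$... wait, this is not of the form $\alpha P' + \beta I$. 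So instead I will use the following: from $tR \le Q + 2P$, apply $\phi$ to get nothing directly, but note $tR \le Q + 2P \le 2I + Q \le 3I$ does not help either. The correct route: since $tR \le Q + 2P$ and both sides are positive, and we need $t\phi(R) \le \phi(Q) + 2\phi(P)$. We apply $\phi$ to both $tR$ and $Q + 2P$. We have $\phi(tR) = t\phi(R)$. For the right side, $Q + 2P$ has two distinct eigenvalues so $\phi(Q+2P) = 1\cdot(I - \phi(P)) + 2\phi(P) = I - \phi(P) + 2\phi(P)$ — but that is $I + \phi(P)$, which is wrong since we want $2\phi(P) + \phi(Q)$ and $\phi(Q) \ne I - \phi(P)$ in general. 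The issue is that $Q$ has rank $1$, not rank $n-1$. So $2P + Q$ does \emph{not} have only two distinct eigenvalues when $n \ge 3$: its spectrum is $\{0, 1, 2\}$.

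This is where the main obstacle lies, and I expect to resolve it by a perturbation/limiting argument using the openness of $\phi(H_n^{>0})$ together with the claims already established. Specifically: for $s > 0$ consider $sI + 2P + Q$, which is strictly positive, and compare $sI + tR \le sI + Q + 2P$ under $\phi$, giving $\phi(sI + tR) \le \phi(sI + Q + 2P)$. Now $sI + tR$ has two distinct eigenvalues $s, s+t$ with $(s+t)$-eigenprojection $R$, so by Claim \ref{linearity on projections}, $\phi(sI + tR) = sI + t\phi(R)$. The right side $sI + 2P + Q$ has three distinct eigenvalues and is not yet controlled; but I would instead bound it from above by something tractable, or better, work from the other direction: we do not need equality, only $t\phi(R) \le \phi(Q) + 2\phi(P)$, and we can get $\phi(Q) + 2\phi(P)$ as a \emph{lower} bound for $\phi$ of a suitable matrix. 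Since $2P + Q \ge$ nothing simple dominated by it, the clean fix is: $2P + Q \ge R'$-type inequalities are what we want to \emph{prove}, so we instead run the argument through $\phi_{\text{restricted to a } 2\text{- or }3\text{-dimensional reducing subspace}}$, or appeal directly to the fact (provable from the preceding claims) that $\phi$ preserves the characteristic polynomial of $Q + 2P$ and hence $\phi(Q + 2P)$ is \emph{some} matrix with eigenvalues $\{0,1,2\}$ and the right multiplicities, then use $\phi(P) \le \phi(Q+2P)$, $\phi(Q) \le \phi(Q+2P)$, together with rank counting, to pin down that $\phi(Q + 2P) \ge 2\phi(P) + \phi(Q)$ — indeed if $\phi(Q+2P)$ has $2$-eigenprojection $P'$ and $1$-eigenprojection $Q'$, then $\phi(P) \le \phi(Q+2P)$ with $\phi(P)$ rank one forces $\phi(P) \le P'$ so $\phi(P) = P'$, and similarly $\phi(Q) \le P' + Q'$ with $\phi(Q) \perp \phi(P) = P'$ (orthogonality of $P,Q$ should be preserved — this needs Proposition \ref{preserves orthogonality} or a direct argument) forces $\phi(Q) \le Q'$, whence $\phi(Q+2P) = 2P' + Q' \ge 2\phi(P) + \phi(Q)$. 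Then from $t\phi(R) = \phi(tR) \le \phi(Q+2P)$... but again $tR \le Q+2P$ does not give $\phi(tR) \le \phi(Q+2P)$ unless we know $\phi$ applied to these specific matrices respects the inequality, which it does by order preservation only if the inequality $tR \le Q + 2P$ holds — and it does. So $\phi(tR) \le \phi(Q+2P)$ by order preservation, giving $t\phi(R) \le 2\phi(P) + \phi(Q)$, hence $\alpha(\phi(P),\phi(R)) \ge t = \alpha(P,R)$, and by monotonicity of $x \mapsto 2/(2-x)$ on $[0,1)$ we conclude $\Tr(\phi(P)\phi(R)) \ge \Tr(PR)$. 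The genuinely delicate points to nail down are: the existence and uniqueness of the auxiliary $Q$ (handling $R \le P$ and $R = Q$ as edge cases), the preservation of orthogonality for the pair $(P,Q)$ (which I would extract either from the two-eigenvalue claim applied to $P + Q \in \mathcal{P}_2$ together with rank considerations, or cite Proposition \ref{preserves orthogonality}), and the identification of $\phi(Q+2P)$ via rank counting among matrices with spectrum $\{0,1,2\}$.
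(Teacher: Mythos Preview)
Your overall strategy---transport $tR \le Q+2P$ through $\phi$, then read off $\alpha(\phi(P),\phi(R)) \ge \alpha(P,R)$---is exactly the paper's. But the execution has one genuine gap and one smaller bug.

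The bug first: from $\phi(P) \le \phi(Q+2P) = 2P' + Q'$ you cannot conclude $\phi(P) \le P'$. Take $\phi(P) = Q'$; then $Q' \le 2P'+Q'$ holds. What you need is $2\phi(P) = \phi(2P) \le \phi(Q+2P)$, which forces every unit vector in $R(\phi(P))$ into the $2$-eigenspace of $\phi(Q+2P)$, hence $\phi(P)=P'$.

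The real gap is your attempt to identify the $1$-eigenprojection $Q'$ of $\phi(Q+2P)$ with $\phi(Q)$. Both of your suggested routes fail. Citing Proposition~\ref{preserves orthogonality} for $\phi$ is circular: that proposition is applied (to $\phi^{-1}$) only \emph{after} this claim, and precisely because this claim is what yields orthogonality preservation. The alternative---``$P+Q\in\mathcal{P}_2$ plus rank considerations''---only gives $\phi(P),\phi(Q)\le \phi(P+Q)$ with $\phi(P)\ne\phi(Q)$, i.e.\ two distinct lines inside a $2$-plane, which need not be orthogonal.

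The fix is that you do not need $Q'=\phi(Q)$ at all. Once you know $\phi(Q+2P)=2\phi(P)+T_1$ for \emph{some} $T_1\in\mathcal{P}_1$ with $T_1\perp\phi(P)$, the inequality $b\phi(R)\le 2\phi(P)+T_1$ (from order preservation and $\phi(bR)=b\phi(R)$) gives $R(\phi(R))\subseteq R(\phi(P))\oplus R(T_1)$, hence $\phi(R)\le \phi(P)+T_1$. Now Lemma~\ref{tR <=} applies with $T_1$ playing the role of the auxiliary projection, because $\alpha(\phi(P),\phi(R))$ depends only on $\phi(P)$ and $\phi(R)$, not on which orthogonal complement you use. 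That yields $b\le \alpha(\phi(P),\phi(R))$ and you are done.
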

\begin{proof}
If $P=R$, both sides are equal to $1$ so assume $P \ne R$. Then there exists a unique $S \in \mathcal{P}_2$ such that $P,R \le S$. Set $Q := S-P$ so that $Q \in \mathcal{P}_1$ and $P \perp Q$ i $R \le P+Q$.

We define $A:= \phi(Q+2P)$. Then $r(A) = 2$. The spectrum of $A$ satisfies $$\sigma(A) = \sigma(Q+2P) \implies \{1,2\} \subseteq \sigma(A) \subseteq \{0,1,2\}$$
and both eigenvalues $1$ and $2$ have multiplicities equal to one.

Hence, we have the spectral decomposition $A = T_1 + 2T_2$ where $T_1,T_2 \in \mathcal{P}_1$ is a perpendicular pair of projections. Furthermore, we have
$$2\phi(P) = \phi(2P) \le \phi(Q+2P) = A = T_1+2T_2 \le 2I$$
which implies $A|_{R(\phi(P))} = 2I$. Therefore, for all nonzero $x \in R(\phi(P))$ we have
$$(T_1 + 2T_2)x=2x$$
which implies that $x$ is an element of the one-dimensional eigenspace associated to the eigenvalue $2$, which is equal to $R(T_2)$.
It follows $R(\phi(P)) \subseteq R(T_2)$. From $r(T_2) = r(\phi(P)) = 1$ we have the equality of ranges so $T_2 = \phi(P)$.

Therefore, $$A = T_1 + 2\phi(P)$$
where $T_1 \in \mathcal{P}_1$ is perpendicular to $\phi(P) \in \mathcal{P}_1$. Set $$b := \frac{2}{2-\Tr(PR)}.$$
Lemma \ref{tR <=} implies $bR \le Q+2P$ and then
$$b\phi(R) \le T_1 + 2\phi(P).$$
In particular, 
$$R(\phi(R)) = R( b\phi(R)) \subseteq R(T_1 + 2\phi(P)) = R(T_1) \oplus R(\phi(P)) = R(T_1 + \phi(P)),$$
which is equivalent to $\phi(R) \le T_1+\phi(P)$. Another application of Lemma \ref{tR <=} gives
$$\frac2{2-\Tr(PR)} = b \le \alpha(\phi(P),\phi(R)) = \frac2{2-\Tr(\phi(P)\phi(R))}$$
which immediately implies $\Tr(PR) \le \Tr(\phi(P)\phi(R)).$
\end{proof}

\begin{claim}
For all $P_1,P_2 \in \mathcal{P}_1$ we have the implication
$$\phi(P_1) \perp \phi(P_2) \implies P_1 \perp P_2.$$
\end{claim}
\begin{proof}
By Claim \ref{expands trace of product} we have
$$0 \le \Tr(P_1P_2) \le \Tr(\phi(P_1)\phi(P_2)) = 0$$
so $\Tr(P_1P_2) = 0$ which directly implies $P_1 \perp P_2$.
\end{proof}

Now assume $n \ge 3$. The map $(\phi|_{\mathcal{P}_1})^{-1} : \mathcal{P}_1 \to \mathcal{P}_1$ satisfies the assumptions of Proposition \ref{preserves orthogonality} so there exists a unitary or antiunitary map $U : \C^n \to \C^n$ such that
$$(\phi|_{\mathcal{P}_1})^{-1}(P) = UPU^*, \quad \text{ for all } P \in \mathcal{P}_1.$$
In other words, we have $\phi(UPU^*) = P$ for all $P \in \mathcal{P}_1$.

Without loss of generality, by considering the map $\phi(U\cdot U^*)$ (which is again injective, preserves order and shrinks spectrum) we can assume that $\phi(P) = P$ for all $P \in \mathcal{P}_1$. We claim that for all $A \in H_n^{\ge 0}$ holds $\phi(A) = A$.

First we consider matrices of the form $A = tI -cP$ where $P \in \mathcal{P}_1$, $t > 0$ and $c \in [0,t]$.

Let $Q \in \mathcal{P}_1$ be such that $Q \perp P$. Then
$$tQ \le tI - cP \le tI$$
so by applying $\phi$ we also get
$$tQ \le \phi(tI -cP) \le tI.$$

This implies $\phi(tI-cP)|_{R(Q)} = tI$. Since $Q$ was arbitrary, it follows $\phi(tI-cP)|_{R(P)^\perp} = tI$.
We have
$$\sigma(\phi(tI-cP)) = \sigma(tI-cP) = \{t,t-c\}$$
and $t-c$ has multiplicity equal to one. We therefore get the spectral decomposition
$$\phi(tI-cP) = t(I-P)+(t-c)P = tI-cP$$
which is precisely what we wanted to show.

Now take an arbitrary $A \in H_n^{\ge 0}$. Then we have the spectral decomposition
$$A = \sum_{j=1}^k t_jP_j$$
for some distinct $t_1, \ldots, t_k > 0$ and nonzero pairwise perpendicular projections $P_1, \ldots, P_k \in \mathcal{P}$. We claim that for all $1 \le j \le k$ and $x \in R(P_j)$ we have $\phi(A)x = t_jx$, while for all $x \in \C^n, x \perp R(P)$ we have $\phi(A)x=0$ where $P:= P_1+\cdots+P_k$.

To prove the first part, we fix $x \in R(P_j)$ and let $Q \in \mathcal{P}_1$ whose image is spanned by $x$. If we set $t = \max_{1 \le j \le k} t_j$, we get
\begin{align*}
    t_j Q &\le t_jP \le \sum_{i=1}^k t_iP_i \le \sum_{i=1}^k t_iP_i+(t-t_j)\underbrace{(P_j-Q)}_{\ge 0} \\
    &= \sum_{\substack{i=1 \\ i\ne j}}^k t_iP_i + tP_j - (t-t_j)Q \le  t\sum_{i=1}^k P_i - (t-t_j)Q \le tI-(t-t_j)Q
\end{align*}
or
$$t_jQ \le A \le tI-(t-t_j)Q.$$
This implies
$$t_jQ = \phi(t_jQ) \le \phi(A) \le \phi(tI - (t-t_j)Q) = tI - (t-t_j)Q,$$
so we conclude $\phi(A)x = t_jx$.

Now take $x \perp R(P)$, and let $R \in \mathcal{P}_1$ whose image is spanned by $x$. Then we have
$$0 \le A = \sum_{i=1}^k t_iP_i \le t\sum_{i=1}^k P_i \le t(I-R)$$
which implies
$$0 \le \phi(A) \le \phi(t(I-R)) = t(I-R)$$
and therefore $\phi(A)x = 0$.

This proves the theorem in case of $n \ge 3$. Since the case $n=1$ is trivial, assume that $n = 2$. Recall that $\phi|_{\mathcal{P}_1} :\mathcal{P}_1 \to \mathcal{P}_1$ is bijective and that for all $P,R \in \mathcal{P}_1$ we have $$\Tr(\phi(P)\phi(R)) \ge \Tr\phi(PR).$$
We claim that the equality holds. Suppose to the contrary that there exist $P,R \in \mathcal{P}_1$ such that $$\Tr(\phi(P)\phi(R)) > \Tr\phi(PR).$$
Choose unitary matrices $U,V \in U_n$ such that
$$P = UE_{11}U^*, \quad \phi(P) = VE_{11}V^*.$$
Replacing $\phi, P$ and $R$ by $V^*\phi(U\cdot U^*)V, E_{11}$ and $U^*RU$ respectively, we can assume that $$\phi(P) = P = E_{11}.$$
There exist $a,b \in [0,1]$ such that 
$$R = \begin{bmatrix} a & * \\ * & 1-a\end{bmatrix}, \quad \phi(R) = \begin{bmatrix} b & * \\ * & 1-b\end{bmatrix}.$$
and because of $\Tr(\phi(P)\phi(R)) > \phi(PR)$ it follows $b > a$. By bijectivity of $\phi|_{\mathcal{P}_1}$ it follows that there exists $Q \in \mathcal{P}_1$ such that $\phi(Q) = E_{22}$. We have $$\phi(P) = E_{11} \perp E_{22} = \phi(Q) \implies P \perp Q$$
which implies $Q = E_{22}$. But then
$$\Tr(\phi(Q)\phi(R)) = 1-b < 1-a = \Tr(QR)$$
which is a contradiction.
\end{proof}

\setcounter{section}{3}
\setcounter{proposition}{1}

As a consequence of Theorem \ref{main result} we obtain the result for maps on hermitian matrices as well:

\begin{corollary}\label{hermitian result}
Let $\phi : H_n \to H_n$ be an injective map which preserves order and shrinks spectrum. Then there exists a unitary or antiunitary map $U : \C^n \to \C^n$ such that $\phi(A) = UAU^*$ for all $A \in H_n$.
\end{corollary}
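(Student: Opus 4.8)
Given $\phi : H_n \to H_n$ injective, order-preserving and spectrum-shrinking, I would like to manufacture from it a map on $H_n^{\ge 0}$ satisfying the hypotheses of Theorem \ref{main result}. The obvious idea is to conjugate $\phi$ by the order isomorphism $A \mapsto A + tI$ of $H_n$ for a large enough shift $t > 0$, and to control where positive matrices are sent. The subtlety is that $\phi$ is defined on all of $H_n$, not just on a translate of the positive cone, and a priori $\phi$ need not map $H_n^{\ge 0}$ (or any translate of it) into itself; so some argument is needed to pin down the behaviour of $\phi$ on large matrices before the reduction goes through.

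First I would establish, exactly as in the proof of Proposition \ref{continuity proof} and the opening claims of the proof of Theorem \ref{main result}, that $\phi$ is continuous and preserves the characteristic polynomial on $H_n$: the Monotonicity theorem arguments used there only require order preservation and spectrum shrinking, and go through verbatim on $H_n$ once one replaces "positive" by "hermitian" and allows eigenvalues of either sign (the sandwiching matrices $B \le A \le C$ with distinct eigenvalues still exist). Actually, the cleanest route avoids even re-running those arguments: since $\phi$ preserves spectrum-shrinking and (as one checks) invertibility-related properties only via order, one shows first that $\phi$ preserves the characteristic polynomial, hence the trace and the smallest eigenvalue. In particular $\lambda_{\min}(\phi(A)) = \lambda_{\min}(A)$ for all $A \in H_n$, so $\phi$ maps $H_n^{\ge 0}$ into $H_n^{\ge 0}$ and, more generally, maps $A + tI \ge 0$ to $\phi(A) + $ (something) with the right spectrum.

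With that in hand, define $\psi : H_n^{\ge 0} \to H_n^{\ge 0}$ by $\psi(A) := \phi(A - tI) + tI$ for a fixed but arbitrary $t > 0$ — wait, this requires $\phi(A - tI) \ge -tI$, which follows from $\lambda_{\min}(\phi(A-tI)) = \lambda_{\min}(A - tI) \ge -t$ since $A \ge 0$. Then $\psi$ is injective, order-preserving (translation is an order isomorphism), and spectrum-shrinking because $\sigma(\psi(A)) = \sigma(\phi(A-tI)) + t \subseteq (\sigma(A-tI)) + t = \sigma(A)$. Theorem \ref{main result} applies: there is a unitary or antiunitary $U$ with $\psi(A) = UAU^*$ for all $A \ge 0$, i.e. $\phi(B) = U(B + tI)U^* - tI = UBU^*$ for all $B \ge -tI$. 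Since $t > 0$ was arbitrary, this holds for all $B \in H_n$, which is the claim.

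**The main obstacle I anticipate** is the very first step: showing that $\phi$ preserves the characteristic polynomial (equivalently, that $\lambda_{\min}$ is preserved) on all of $H_n$ without already knowing continuity. The proof of Proposition \ref{continuity proof} uses that $\phi$ preserves spectrum; here we only have spectrum-shrinking, and the trick used in the proof of Theorem \ref{main result} to upgrade shrinking to preserving relied on the invariance-of-domain argument applied to $\phi|_{H_n^{>0}}$, which presupposes we are on the positive cone. I would circumvent this by first applying the translation trick with an \emph{unspecified} large $t$ to land inside $H_n^{\ge 0}$ — i.e. fix any $A \in H_n$, choose $t > \|A\|$ so that $A + tI > 0$, note $\phi(A + tI) = \phi_{?}(\ldots)$ — and bootstrap: the composite $\tilde\phi(X) := \phi(X - tI) + tI$ on a neighbourhood of such shifted matrices is injective, order-preserving and spectrum-shrinking \emph{as a map into $H_n$}, and one shows directly (via the Monotonicity-theorem sandwiching, which needs only order and shrinking, not full preservation) that it must preserve the characteristic polynomial there, hence so does $\phi$ near $A$. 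Since $A$ was arbitrary and $t$ can be taken uniformly on compacta, this gives characteristic-polynomial preservation, hence continuity, on all of $H_n$, and the reduction above then closes the argument.
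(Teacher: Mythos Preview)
Your approach is the same as the paper's --- translate by $tI$ and apply Theorem~\ref{main result} --- but you are making it harder than it needs to be in one place and skipping a necessary step in another.

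\textbf{The overcomplication.} Your ``main obstacle'' is not an obstacle at all. To know that $\psi_t(A) := \phi(A - tI) + tI$ lands in $H_n^{\ge 0}$ for $A \ge 0$, you do not need $\lambda_{\min}(\phi(B)) = \lambda_{\min}(B)$; you only need $\lambda_{\min}(\phi(B)) \ge \lambda_{\min}(B)$, and this is immediate from spectrum shrinking: every eigenvalue of $\phi(B)$ lies in $\sigma(B)$, so the smallest one is at least $\lambda_{\min}(B)$. Hence $A \ge 0 \Rightarrow A - tI \ge -tI \Rightarrow \phi(A - tI) \ge -tI \Rightarrow \psi_t(A) \ge 0$. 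No characteristic-polynomial argument, no continuity, no sandwiching is needed before invoking Theorem~\ref{main result}. The paper simply says ``$\psi_\alpha$ clearly satisfies all assumptions'' for exactly this reason.

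\textbf{The gap.} Your sentence ``Since $t > 0$ was arbitrary, this holds for all $B \in H_n$'' hides a real issue: for each $t$ you obtain a unitary or antiunitary $U_t$ with $\phi(B) = U_t B U_t^*$ on $\{B \ge -tI\}$, but a priori $U_t$ depends on $t$. You must argue that all the $U_t$ agree (up to a unimodular scalar). The paper does this explicitly: for any $\alpha \ge 0$ and any $A \ge 0$ one has $U_\alpha A U_\alpha^* = \phi(A) = U_0 A U_0^*$, so $U_0^* U_\alpha$ commutes with every positive matrix and is therefore a scalar multiple of $I$; absorbing that scalar gives $U_\alpha = U_0$. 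Only after this can you conclude $\phi(B) = U_0 B U_0^*$ for all $B \in H_n$. This step is short but not automatic, and your write-up omits it.
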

\begin{proof}
For arbitrary $\alpha \ge 0$ we consider the map 
$$\psi_\alpha : H_n^{\ge 0} \to H_n^{\ge 0}, \quad \psi_\alpha(A) := \phi(A-\alpha I) + \alpha I.$$
The map $\psi_\alpha$ clearly satisfies all assumptions of Theorem \ref{main result}. It follows that there is a unitary or antiunitary map $U_\alpha : \C^n \to \C^n$ such that
$$\psi_\alpha(A) = U_\alpha AU_\alpha^*, \quad \text{ for all } A \ge 0.$$
For all $A \ge 0$ we therefore have
$$\phi(A-\alpha I) = \psi_\alpha (A)-\alpha I = U_\alpha AU_\alpha^* - \alpha I = U_\alpha(A-\alpha I)U_\alpha^*.$$
It follows that
$$\phi(A) = U_\alpha AU_\alpha^*, \quad \text{ for all } A \in H_n, A \ge -\alpha I.$$
Now fix $\alpha \ge 0$. For every $A \ge 0$ trivially we have $A \ge -\alpha I$ and hence
$$U_\alpha AU_\alpha^* = \phi(A) = U_0AU_0^* \implies U_0^*U_\alpha A = A U_0^*U_\alpha.$$
Therefore, $U_0^*U_\alpha$ commutes with all positive matrices and hence is of the form $\lambda I$ for some $\lambda \in \Sph^1$. It follows that $U_\alpha = \lambda U_0$, so without loss of generality we may assume $U_\alpha = U_0$.

For every $A \in H_n$ holds $A \ge -\norm{A}I$ and hence
$$\phi(A) = U_{\norm{A}}AU_{\norm{A}}^* = U_{0}AU_{0}^*$$
which proves the claim.
\end{proof}

\section{Counterexamples}

In this section, when $n \ge 2$, we show that none of the assumptions in the Theorem \ref{main result} can be omitted, even if we replace spectrum shrinking by spectrum preserving and add continuity as an additional assumption.

Evidently, when $n=1$, all assumptions are superfluous except spectrum shrinking.

\begin{example}\label{sorted spectrum}
Let $n \ge 2$ and consider the map $\phi : H_n \to H_n$ which maps a matrix $A \in H_n$ to
$$\phi(A) := \diag(\lambda_1(A), \ldots, \lambda_n(A))$$
where $\lambda_1(A) \le \cdots \le \lambda_n(A)$ are all eigenvalues of $A$. Then $\phi$ is continuous, order and spectrum preserving, but not injective. Its restriction $\phi|_{H_n^{\ge 0}} : H_n^{\ge 0} \to H_n^{\ge 0}$ obviously has the same properties.

The order preserving property is precisely Theorem \ref{monotonicity theorem}.
\end{example}

\begin{example}
The map $\phi : H_n^{\ge 0} \to H_n^{\ge 0}$ given by $\phi(A) = A^{1/2}$ is injective, continuous and order preserving but is not spectrum shrinking.
\end{example}

\begin{example}
Let $n \ge 2$. For a matrix $A \in H_n$ denote its set of eigenvalues by $\lambda_1(A) \le \cdots \le \lambda_n(A)$, as usual. Define
$$\mathscr{U} := \{A \in H_n^{\ge 0} : \lambda_j(A) \in \langle j-1,j+1\rangle\}.$$
Notice that $\mathscr{U}$ is an open set in $H_n^{\ge 0}$. Namely, recall that the map $F : H_n^{\ge 0} \to \R^n, A \mapsto \left(\lambda_1(A), \ldots, \lambda_n(A)\right)$ is continuous and $\mathcal{U}$ is precisely the preimage of the open set $\prod_{j=1}^n \langle j-1,j+1\rangle$ by the map $F$.

For any $A \in H_n^{\ge 0}$ define the unitary matrix $W_A \in U_n$ as
$$W_A := \begin{cases}
I, &\quad\text{ if }A \notin \mathscr{U},\\
\diag\left(\begin{bmatrix} \cos(2\pi r_A) & -\sin(2\pi r_A) \\ \sin(2\pi r_A) & \cos(2\pi r_A)\end{bmatrix}, 1, \ldots, 1\right), &\quad\text{ if }A \in \overline{\mathscr{U}}.\\
\end{cases}$$
where $r_A = \max_{1 \le j \le n} \abs{\lambda_j(A)-j} \in [0,1]$. This definition makes sense because
$$\overline{\mathscr{U}} := \{A \in H_n^{\ge 0} : \lambda_j(A) \in [j-1,j+1]\}$$
and for $A \in \overline{\mathscr{U}}\setminus \mathscr{U}$ we have $r_A = 1$ and hence $W_A= I$. Moreover, the map $H_n^{\ge 0} \to U_n, A \mapsto W_A$ is continuous by the pasting lemma. Notice also that $W_A$ depends only on the characteristic polynomial of $A$.    

Now define the map $$\phi : H_n^{\ge 0} \to H_n^{\ge 0}, \quad \phi(A) := W_A A W_A^*, \quad A \ge 0.$$
Then $\phi$ is clearly continuous and spectrum preserving. We claim that $\phi$ is also injective. Indeed, let $A,B \in H_n^{\ge 0}$ such that $\phi(A) = \phi(B)$. In particular we have the equality of characteristic polynomials $k_A = k_B$, so $W_A = W_B$ as well and hence
$$\phi(A) = \phi(B) \implies W_A A W_A^* = W_A B W_A^* \implies A= B.$$
However, $\phi$ does not preserve order. Indeed, if we define
$$A= \diag(1,2,\ldots,  n-1, n), \quad B = \diag\left(1,2, \ldots, n-1, n+\frac34\right)$$
Then $A,B \in \mathscr{U}$ and $r_A = 0, r_B = \frac34,$ so
$$\phi(A) = A, \quad \phi(B) = \diag\left(2,1,3, \ldots, n-1, n+\frac34\right).$$
We have $A \le B$ but $\phi(A) \not\le \phi(B)$.
\end{example}


\begin{thebibliography}{99}

\bibitem{Aupetit1} B.~Aupetit, \emph{Propri\'{e}t\'{e}s spectrales des alg\`{e}bres de Banach}, Lecture Notes in Mathematics 735, Springer, Berlin, 1979.

\bibitem{Aupetit2} B.~Aupetit, \emph{Spectrum-preserving linear mappings between Banach algebras or Jordan-Banach algebras},  J. London Math. Soc. \textbf{62}(3) (2000), 917--924.

\bibitem{BresarSemrl} M.~Bre\v{s}ar and P.~\v{S}emrl, \emph{An extension of the Gleason–Kahane–\.{Z}elazko theorem: A possible approach to Kaplansky’s problem}, Expo. Math. \textbf{26}(3) (2008) 269--277.

\bibitem{Gleason} A.~M.~Gleason, \emph{A characterization of maximal ideals}, J. Analyse Math. \textbf{19} (1967), 171--172.

\bibitem{HornJohnson} R.~A.~Horn and C.~R.~Johnson, \emph{Matrix Analysis}, Cambridge University
Press, Cambridge, 1985.

\bibitem{KahaneZelazko} J.~P.~Kahane and W.~\.{Z}elazko, \emph{A characterization of maximal ideals in commutative Banach algebras}, Studia Math. \textbf{29} (1968), 339--343.

\bibitem{Kanuith} E.~Kaniuth, \emph{A Course in Commutative Banach Algebras}, Graduate Texts in Mathematics 246, Springer, New York, 2009.

\bibitem{Kaplansky} I.~Kaplansky, \emph{Algebraic and analytic aspects of operator algebras}, Amer. Math. Soc. (1970), Providence.

\bibitem{Molnar} L.~Moln\'{a}r, \emph{Selected Preserver Problems on Algebraic Structures of Linear Operators and on Function Spaces}, Lect. Notes Math. 1895, Springer-Verlag, Berlin, 2007.

\bibitem{EffectAlgebras} P.~\v{S}emrl, \emph{Automorphisms of Hilbert space effect algebras}, 
J. Phys. A \textbf{48}(19) (2015), 195301, 18 pp.

\bibitem{semrl} P.~\v{S}emrl, \emph{Order and spectrum preserving maps on positive operators}, 
Canad. J. Math. \textbf{69}(6) (2017), 1422--1435.

\end{thebibliography}
\end{document}